\documentclass[11pt,a4paper,oneside]{amsart}

\usepackage{amsmath,amssymb,amscd,amsthm,esint}

\usepackage{graphics,amsmath,amssymb,amsthm,mathrsfs,amsfonts,amsrefs}

\usepackage{sidecap}
\usepackage{float}
\usepackage{extarrows}
\usepackage{booktabs}
\usepackage{verbatim}
\usepackage{hyperref}
\usepackage[usenames,dvipsnames]{xcolor}

\newtheorem{thm}{Theorem}[section]
\newtheorem{lemma}[thm]{Lemma}
\newtheorem{cor}[thm]{Corollary}
\newtheorem{prop}[thm]{Proposition}

\theoremstyle{definition}
\newtheorem{remark}[thm]{Remark}
\newtheorem{assumption}[thm]{Assumption}
\newtheorem{definition}[thm]{Definition}

\def\XXint#1#2#3{{\setbox0=\hbox{$#1{#2#3}{\int}$}
         \vcenter{\hbox{$#2#3$}}\kern-.5\wd0}}

\def\R{\mathbb{R}}

\DeclareMathOperator{\supp}{supp}

\numberwithin{equation}{section}

\begin{document}

\title[ Inverse Scattering Problems of NLS]{High-Velocity Inverse Scattering\\ for Nonlinear Schr\"odinger Equations\\ with Spatially Dependent Nonlinearities}
	\author[S. Masaki]{Satoshi Masaki}
\address{Department of mathematics, 
Hokkaido University, Sapporo Hokkaido, 060-0810, Japan}
\email{masaki@math.sci.hokudai.ac.jp}
	\author[Y. Zhao]{Yaxin Zhao}
\address{Department of Mathematics, University of Connecticut, Storrs, CT 06269, USA}	\email{cql25001@uconn.edu}
	

	\begin{abstract}
We study a high-velocity inverse scattering problem for nonlinear Schrödinger equations with spatially dependent nonlinearities in dimensions $d\ge3$. We consider the whole mass-supercritical and energy-subcritical range, including the endpoint cases. By introducing a moving frame adapted to highly boosted initial data, we construct the scattering operator for a class of large incoming states generated by Galilean boosts. The key observation is that, although the boosted data become large in Sobolev norms, the nonlinear interaction becomes effectively weak at high velocity due to rapid spatial separation.
Using the resulting high-velocity asymptotics, we derive a reconstruction formula for the X-ray transform of the coefficient. As a consequence, we prove that the scattering operator uniquely determines both the nonlinearity exponent and the spatial coefficient. 
Our results extend previous work of Watanabe to all dimensions $d \ge 3$, include the endpoint nonlinearities, and replace the repulsiveness and radial monotonicity assumptions on the coefficient by suitable decay conditions.
\end{abstract}

        
\keywords{nonlinear Schrödinger equation, inverse scattering, high-velocity scattering, X-ray transform, scattering operator}

\subjclass[2020]{Primary 35P25, Secondary 35Q55, 35R30, 35B40}

\maketitle	

\section{Introduction}\label{sec-intro}

\subsection{Background and main idea}

Inverse scattering problems for nonlinear dispersive equations aim to recover information on nonlinear interactions from the associated scattering operator. In recent years, various inverse problems for nonlinear Schr\"odinger equations (NLS) have been studied extensively, motivated both by mathematical scattering theory and by applications to wave propagation in inhomogeneous media. In this paper, we consider the nonlinear Schrödinger equation
\begin{align}\label{E:NLS}
(i\partial_t+\Delta)u = \alpha(x)|u|^p u,
\qquad
(t,x)\in\R\times\R^d,
\end{align}
where $d\ge3$ is the spatial dimension, $\alpha(x)$ is a spatially dependent coefficient, and $p$ belongs to the whole mass-supercritical and energy-subcritical range including both endpoint cases:
\begin{align}\label{E:range_p}
\tfrac{4}{d}\le p\le \tfrac{4}{d-2}.
\end{align}

The study of scattering operators for nonlinear dispersive equations goes back to the pioneering work of Strauss \cite{St74}. Since then, inverse scattering problems for nonlinear Schrödinger equations have been extensively studied in various settings; see for instance Weder \cite{We97,We01}. More recent developments include \cite{CM,KMV23,KMV25,Sa08,Sa12} and the references therein.

From the viewpoint of high-velocity inverse scattering, Watanabe \cite{Wa} studied the three-dimensional case of \eqref{E:NLS} in the non-endpoint range of \eqref{E:range_p} under the repulsive condition $\alpha\ge0$ and the radial monotonicity condition $x\cdot \nabla \alpha \le 0$, by adapting the geometrical high-velocity method of Enss and Weder \cite{EW}. In particular, he considered the injectivity of the map sending the pair $(p,\alpha)$ to the scattering operator $S$.
Later, Murphy \cite{Mu} pointed out that part of the analysis may require additional justification and established the result in the mass-subcritical and mass-critical cases.

Our approach further develops this geometric high-velocity framework in the energy-space setting including endpoint nonlinearities. 
In the present paper, we revisit the high-velocity analysis of Watanabe \cite{Wa} and establish the result throughout the full range \eqref{E:range_p}.
In particular,
we extend the result to all dimensions $d\ge3$, including the endpoint cases. Furthermore, the repulsiveness condition $\alpha\ge0$ and the radial monotonicity condition $x\cdot\nabla\alpha\le0$ are replaced by a suitable decay assumption on $\alpha$.

The basic idea is to extract information on the X-ray transform of the coefficient,
\[
\int_{-\infty}^{\infty}\alpha(x+2\theta t)\,dt,
\qquad
\theta\in\mathbb{S}^{d-1},
\]
by considering highly boosted incoming states of the form
\[
u_-=e^{iv\cdot x}\phi,
\]
where $v$ is sufficiently large and $\theta=v/|v|$. Roughly speaking, the boosted wave packet propagates almost freely along the Galilean trajectory $x+2tv$. Since the coefficient $\alpha$ is spatially localized, the interaction between the wave packet and the coefficient becomes effectively short-lived in the moving frame. As a consequence, the leading high-velocity asymptotics naturally produce the X-ray transform of the coefficient.

Let us explain the problem in more detail and describe the main difficulty. We consider solutions to \eqref{E:NLS} satisfying
\[
e^{-it\Delta}u(t)-u_-\to0
\qquad
(t\to-\infty),
\]
where $u_-$ is a highly boosted state. If the corresponding solution exists globally forward in time and satisfies
\[
e^{-it\Delta}u(t)-u_+\to0
\qquad
(t\to\infty),
\]
then the scattering operator $S$ is defined by assigning $u_-$ to $u_+$.

The principal difficulty is that highly boosted data are not small in $H^1$. Indeed, for any nontrivial $\phi$,
\[
\|e^{iv\cdot x}\phi\|_{H^1}\to\infty
\qquad
(|v|\to\infty).
\]
For linear equations, the largeness of the data is not problematic. In nonlinear problems, however, the relative strength of the nonlinearity depends essentially on the size of the solution. In particular, when $\alpha$ is constant, the equation becomes translation invariant, and depending on the sign of the coefficient, standing-wave solutions or blow-up solutions may occur. Thus, the issue of large data is essential in the study of the well-definedness of the scattering operator.

In the present setting, however, the mechanism producing large $H^1$-norms is completely explicit: it arises solely from the Galilean boost. Although the $H^1$-norm becomes large, this largeness is, in a sense, only apparent once one identifies its origin. The perturbative structure in our problem therefore arises not from small amplitude, but from rapid spatial separation induced by the Galilean flow.

Our key idea is to introduce a moving frame in which the boost disappears. In this moving coordinate system, the solution appears almost stationary, while the coefficient $\alpha$ moves rapidly along the opposite direction. Since the coefficient is spatially localized, this rapid motion weakens the effective nonlinear interaction. Consequently, from the viewpoint of the moving frame, the boost is no longer a harmful mechanism producing largeness, but rather a favorable mechanism producing effective smallness. Thus, although the initial data are large in Sobolev norms, the nonlinear interaction itself becomes weak at high velocity.

Using this idea, we construct the scattering operator for highly boosted data. Once the scattering operator is constructed, arguments similar to those in \cite{Wa,Mu} show that the scattering operator uniquely determines both the exponent $p$ and the coefficient $\alpha$.

\subsection{Main results}

Given an integer $a\ge0$ and a real number $s\in[0,d/2)$, we define the space $X^{a,s}$ by the norm
\[
\|\varphi\|_{X^{a,s}}
=
\sum_{i\le a}
\||\nabla|^i\varphi\|_{X^s},
\]
where the space $X^s$ is defined by
\[
\|\varphi\|_{X^s}
:=
\|\langle x\rangle^s\varphi\|_{L^2}
+
\||\nabla|^s\varphi\|_{L^{\frac{2d}{d+2s}}}.
\]
The spaces $X^{a,s}$ are introduced to measure both spatial localization and weighted regularity adapted to the moving-frame analysis.

Our assumptions on the coefficient $\alpha$ are as follows.

\begin{assumption}\label{A:alpha}
Let $p$ satisfy \eqref{E:range_p}. We introduce the following conditions on $\alpha:\mathbb{R}^d\to\mathbb{C}$:
\begin{itemize}
\item[$(\mathrm{A1})$]
There exists $\sigma_1>\frac{p+1}{p+2}$ such that
$
\langle x\rangle^{\sigma_1}\alpha\in W^{1,\infty}$.
\item[$(\mathrm{A2})$]
There exists $\sigma_2\in(1,d/2)$ such that
$
\langle x\rangle^{\sigma_2}\alpha\in L^\infty$.
\end{itemize}
\end{assumption}
Condition $(\mathrm{A1})$ is used to construct the scattering operator for highly boosted data, while $(\mathrm{A2})$ is required in the analysis of the inverse problem and the recovery formula.
Our restriction $d\ge3$ comes from this point.

To formulate the assumptions on the data, we introduce two indices $s_1$ and $s_2$.

\begin{definition}\label{D:sj}
Suppose $p$ satisfies \eqref{E:range_p}. We define $s_1,s_2\in(0,d/2)$ as follows.
\begin{enumerate}
\item
Suppose $\alpha$ satisfies $(\mathrm{A1})$ with some $\sigma_1>\frac{p+1}{p+2}$. Let $s_1\in(0,d/2)$ satisfy
\[
ps_1\in\left(1,\tfrac{p+2}{p+1}\sigma_1\right].
\]
\item
Suppose $\alpha$ satisfies $(\mathrm{A2})$ with some $\sigma_2>1$. We define
\[
s_2=\sigma_2.
\]
Since $\sigma_2<d/2$, we also have $s_2<d/2$.
\end{enumerate}
\end{definition}

We denote by $S=S_{p,\alpha}$ the scattering map sending $u_-\in H^1$ to $u_+\in H^1$, namely,
\begin{align}
Su_-
=
u_-
-i\int_{\R}
e^{-it\Delta}\alpha(x)|u|^pu(t)\,dt,
\end{align}
where $u$ is the solution to \eqref{E:NLS} scattering to $u_-$ as $t\to-\infty$.

We define
\begin{equation}\label{E:defB}
B=B_C
:=
\bigcup_{M>0}
\left\{
e^{iv\cdot x}\varphi
:
\|\varphi\|_{H^2\cap X^{1,s_1}}\le M,
\quad
|v|\ge CM^{p+2}+1
\right\},
\end{equation}
where $C>0$ is a constant. The set $B_C$ consists of highly boosted data whose velocity is sufficiently large compared with the size of the profile.

Our first main result establishes the well-definedness of the scattering operator on $B_C$.

\begin{thm}\label{T:well-defined}
Let $d\ge3$, and let $p$ satisfy \eqref{E:range_p}. Suppose that $\alpha$ satisfies $(\mathrm{A1})$. Let $s_1$ be given by Definition \ref{D:sj}. Then there exists a positive constant $C>0$, depending only on $d$, $p$, and
\[
\|\langle x\rangle^{\sigma_1}\alpha\|_{W^{1,\infty}},
\]
such that the scattering operator $S$ is well-defined as a map from $B_C$ to $H^1$.
\end{thm}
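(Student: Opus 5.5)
We plan to remove the boost by a Galilean change of variables, so that the problem becomes a small-data problem with a rapidly moving coefficient. Write $u_-=e^{iv\cdot x}\varphi$ and seek
\[
u(t,x)=e^{iv\cdot x-i|v|^2t}\,w(t,x-2vt).
\]
Then $w$ solves
\[
(i\partial_t+\Delta)w=\alpha(x+2vt)|w|^pw,
\qquad
e^{-it\Delta}w(t)\to\varphi \quad (t\to-\infty).
\]
Here $\varphi$ is bounded by $M$ in $H^2\cap X^{1,s_1}$, independently of $v$. The large-data issue is thereby transferred to the time dependence of the coefficient $\alpha_v(t,x):=\alpha(x+2vt)$, which is concentrated near $x=-2vt$ and sweeps past the bulk of $w$ in a time of order $|v|^{-1}$. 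Since the Galilean transform is an isometry on $L^2$-based Strichartz spaces, and maps $H^1$ scattering states to $H^1$ scattering states (with an $H^1$ norm growing polynomially in $|v|$, which is harmless here), it suffices to construct a global solution $w$ with finite Strichartz norms. Existence of $u_+$ and the formula for $Su_-$ then follow from the standard Cook argument.

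We solve the final-state problem for $w$ by a contraction argument for the Duhamel map
\[
\Phi(w)(t)=e^{it\Delta}\varphi-i\int_{-\infty}^te^{i(t-s)\Delta}\alpha_v|w|^pw\,ds
\]
in a ball of a space combining the following norms:
\begin{itemize}
\item an $H^1$-level Strichartz norm, e.g.\ $L^\infty H^1\cap L^2W^{1,2^*}$ with $2^*=\frac{2d}{d-2}$, which suffices for the endpoints $p=4/d$ and $p=4/(d-2)$;
\item a weighted, localized norm measuring how much of $w$ lies where $\alpha_v$ lives.
\end{itemize}
The linear part $e^{it\Delta}\varphi$ is controlled using the $X^{1,s_1}$ information. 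The weight $\langle x\rangle^{s_1}\varphi\in L^2$ together with the estimate on $|\nabla|^{s_1}\varphi\in L^{2d/(d+2s_1)}$ yields decay of $e^{it\Delta}\varphi$ in regions $|x|\gtrsim|v||t|$, roughly
\[
\bigl\|\langle x+2vt\rangle^{-\sigma}e^{it\Delta}\varphi\bigr\|_{L^2}\lesssim\langle vt\rangle^{-s_1}+\text{dispersive decay}.
\]
The $H^2$ regularity controls one derivative uniformly in time.

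The key nonlinear estimate is
\[
\bigl\|\alpha_v|w|^pw\bigr\|_{N^1}\lesssim C_\alpha\,|v|^{-\kappa}\,\|w\|^{p+1},
\]
where $N^1$ is the dual Strichartz space with one derivative. To prove it, we split $\alpha_v$ using the weight $\langle x+2vt\rangle^{-\sigma_1}$ coming from (A1). The factor $|w|^p$ is split into a near part and a far part: the near part is where $|x|\lesssim|vt|/2$, and there $\langle x+2vt\rangle^{-\sigma_1}\lesssim\langle vt\rangle^{-\sigma_1}$; the far part is controlled by the localization of $w$ (weight $s_1$). Taking a fractional power $p s_1$ of the weight on $p$ copies of $w$ and $\sigma_1$ of $\alpha$, the time integral $\int\langle vt\rangle^{-\gamma}dt\sim|v|^{-1}$ converges once $\gamma>1$. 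The condition $ps_1>1$ guarantees this. The constraint $ps_1\le\frac{p+2}{p+1}\sigma_1$ is exactly what balances the Hölder exponents when distributing the weights over $p+1$ factors plus the derivative term. The derivative falling on $\alpha$ is handled by $\nabla\alpha\in\langle x\rangle^{-\sigma_1}L^\infty$, again from (A1).

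The outcome is that $\Phi$ maps a ball of radius $2CM$ to itself and is a contraction (in a weaker norm without derivatives, as is standard at the energy-critical endpoint) provided $|v|^{-\kappa}M^{p}\ll1$, up to powers of $M$ coming from the weighted norms. Tracking these powers should give the threshold $|v|\ge CM^{p+2}+1$.

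The main obstacle is propagating the weighted localization norm of $w$ through the nonlinear iteration, not just for the free evolution. The weights $\langle x\rangle^{s}$ do not commute with $e^{it\Delta}$. Our first attempt would be to use the Galilean vector field $J=x+2it\nabla$, or its fractional version $|J|^{s}=M(t)(-t^2\Delta)^{s/2}M(-t)$, which commutes with $i\partial_t+\Delta$. This is the reason the norm $\||\nabla|^{s}\varphi\|_{L^{2d/(d+2s)}}$ appears in $X^s$: via the pseudo-conformal factorization it controls $|J|^{s_1}e^{it\Delta}\varphi$. We would then prove a fractional Leibniz-type bound for $|J|^{s_1}(\alpha_v|w|^pw)$, restricting to $s_1$ small enough (or to $ps_1$ in the given window) that only $p\,s_1$-Hölder regularity of $|w|^p$ is needed. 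If this fails at the endpoints, we would instead localize directly with the time-dependent cutoff $\chi(x/(|v||t|))$ and estimate the commutator errors, which carry an extra factor $(|v||t|)^{-1}$.
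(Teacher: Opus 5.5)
\textbf{There is a genuine gap at the key nonlinear estimate.}

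Your moving-frame reduction and your decay bound for the free part agree with the paper (its Lemma 2.4, built on the mismatch estimate of Proposition 2.3). The step everything rests on, however, is missing.

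Your key bound is $\|\alpha_v|w|^pw\|_{N^1}\lesssim C_\alpha|v|^{-\kappa}\|w\|^{p+1}$, with $\|w\|$ containing an unweighted $H^1$-Strichartz norm of size $\sim M$. Through your near/far splitting, this bound requires the \emph{nonlinear} solution $w$ to stay localized away from $x=-2vt$, with time-integrable decay. You flag this yourself as the main obstacle and offer only tentative remedies, none of which is carried out.

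These remedies are not routine. The commutator $[J,\alpha_v]=2it(\nabla\alpha)(x+2vt)$ grows in $t$. Moreover, since $x=J-2it\nabla$, $L^2$ control of $Jw$ and $\nabla w$ only gives $\|\mathbf{1}_{\{|x|\gtrsim|vt|\}}w\|_{L^2}\lesssim(|v||t|)^{-1}\|Jw\|_{L^2}+|v|^{-1}\|\nabla w\|_{L^2}$, and the second term does not decay in $t$.

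A side correction: $|J|^{s}e^{it\Delta}\varphi=e^{it\Delta}|x|^{s}\varphi$ is controlled by the $\langle x\rangle^{s}L^2$ part of $X^s$, not by $\||\nabla|^{s}\varphi\|_{L^{2d/(d+2s)}}$. The latter enters the mismatch estimate through dispersive decay of the part of $\varphi$ at frequencies $\gtrsim|v|$. As a consequence of all this, the threshold $|v|\ge CM^{p+2}+1$ is only conjectured in your write-up.

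\textbf{The missing idea: no localization of the nonlinear part is ever needed.} Run the contraction in the single norm $\|w\|_Y:=\|\langle x+2tv\rangle^{-\frac{p}{p+2}s_1}w\|_{L^{q_0}_tH^{1,r_0}_x}$ with $q_0=p+2$, on the ball of radius $2C_0|v|^{-\frac1{p+2}}M$. This is essentially the ``weighted norm measuring how much of $w$ lies where $\alpha_v$ lives'' that you already proposed. The argument then runs as follows.
\begin{enumerate}
\item Since the weight is $\le 1$ with bounded gradient, there is nothing to commute. The $Y$-norm of the Duhamel term is bounded by its unweighted $L^{q_0}H^{1,r_0}$ norm, hence by Strichartz by $\|\alpha_v|w|^pw\|_{L^{q_0'}H^{1,r_0'}}$.
\item Put one factor $\langle x+2tv\rangle^{-\frac{p}{p+2}s_1}$ on each of the $p+1$ copies of $w$. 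Absorb the compensating $\langle x+2tv\rangle^{\frac{p(p+1)}{p+2}s_1}$ into $\alpha_v$ using (A1); this is exactly where $ps_1\le\frac{p+2}{p+1}\sigma_1$ is used.
\item Apply H\"older with $p$ copies in $L^{r_c}$, $r_c=\frac{dp(p+2)}4$, which is controlled by $H^{1,r_0}$ via Sobolev. This yields $\|\Phi(w)\|_Y\le C_0|v|^{-\frac1{p+2}}M+C_1\|w\|_Y^{p+1}$.
\item The first term is the free bound. It comes from $\|\langle tv\rangle^{-\frac{p}{p+2}s_1}\|_{L^{p+2}_t}\sim|v|^{-\frac1{p+2}}$, which is finite because $ps_1>1$.
\item The ball is preserved if $(|v|^{-\frac1{p+2}}M)^p\lesssim1$, i.e.\ $|v|\gtrsim M^{p+2}$. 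The contraction in the weighted $L^{q_0}L^{r_0}$ metric is the same computation.
\item The nonlinear correction is then merely small, $\|w-e^{it\Delta}\varphi\|_{L^\infty H^1\cap L^{q_0}H^{1,r_0}}\lesssim|v|^{-\frac{p+1}{p+2}}M^{p+1}$, so $H^1$ scattering follows by Cook's argument.
\end{enumerate}
In particular, neither your unweighted ball of radius $2CM$ nor an explicit factor $|v|^{-\kappa}$ in the nonlinear estimate is needed. The only small parameter is the weighted size $|v|^{-\frac1{p+2}}M$ of the free evolution.
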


\begin{remark}
Compared with the result of Watanabe \cite{Wa}, we treat general dimensions $d\ge3$ and the full range \eqref{E:range_p}, including the endpoint cases. In addition, the assumptions $\alpha \ge0$ and $x\cdot \nabla \alpha \le 0$ imposed in \cite{Wa} are replaced by the decay condition $(\mathrm{A1})$, allowing $\alpha$ to be a general complex-valued function.
\end{remark}

We now turn to the inverse problem.
The following theorem identifies the leading high-velocity asymptotics of the scattering operator. The main contribution is given by the nonlinear interaction accumulated along the Galilean trajectory of the boosted wave packet, which results in the X-ray transform of the coefficient.

\begin{thm}\label{T:inverse-formula}
Let $d\ge3$ and let $p$ satisfy \eqref{E:range_p}. Suppose that $\alpha$ satisfies $(\mathrm{A1})$ and $(\mathrm{A2})$. 
 Let $s_1$ and $s_2$ be given by Definition \ref{D:sj}.
Let
\[
\varphi,\psi\in H^2\cap X^{1,s_1}\cap X^{s_2}.
\]
Then, for every $\theta\in\mathbb{S}^{d-1}$,
\begin{align}\label{E:recovery}
\lim_{\rho\to\infty}
i\rho
\left\langle
(S-I)(e^{i\rho\theta\cdot x}\varphi),
e^{i\rho\theta\cdot x}\psi
\right\rangle
=
\int_{\R}
\left\langle
\alpha(x+2t\theta)
|\varphi|^p\varphi,
\psi
\right\rangle
dt,
\end{align}
where $S:B\to H^1$ denotes the scattering map constructed in Theorem \ref{T:well-defined}.
\end{thm}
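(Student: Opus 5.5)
The plan is to pass to the moving frame used in the proof of Theorem \ref{T:well-defined}. For $u_-=e^{i\rho\theta\cdot x}\varphi$ write $v=\rho\theta$ and use the Galilean transformation
\[
u(t,x)=e^{iv\cdot x-i|v|^2t}\,w(t,x-2vt).
\]
Then $w$ solves
\[
(i\partial_t+\Delta)w=\alpha(x+2vt)|w|^pw
\]
and scatters to $\varphi$ as $t\to-\infty$. The pairing in \eqref{E:recovery} is invariant under conjugation by the boost, and $e^{-it\Delta}$ intertwines with the Galilean transform. This gives the identity
\[
\langle (S-I)(e^{iv\cdot x}\varphi),e^{iv\cdot x}\psi\rangle
=-i\int_{\R}\langle \alpha(x+2vt)|w|^pw(t),\,e^{it\Delta}\psi\rangle\,dt .
\]
Rescaling time by $t=\tau/\rho$ (so $2vt=2\tau\theta$) turns this into
\[
i\rho\langle (S-I)u_-,e^{iv\cdot x}\psi\rangle
=\int_{\R}\langle \alpha(x+2\tau\theta)|w|^pw(\tau/\rho),\,e^{i(\tau/\rho)\Delta}\psi\rangle\,d\tau .
\]
The task therefore reduces to showing that this integral converges to the right-hand side of \eqref{E:recovery}.

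Convergence of the integrand will come from two facts. First, $w(\tau/\rho)\to\varphi$ for each fixed $\tau$. Second, $e^{i(\tau/\rho)\Delta}\psi\to\psi$ in $H^1$ (using $\psi\in H^2$, with rate $O(|\tau|/\rho)$). For the first fact I would use the a priori bounds from the construction in Theorem \ref{T:well-defined}. In the moving frame, $w-e^{it\Delta}\varphi$ is small uniformly in time, of size roughly $M^{p+1}/|v|^{c}$. This is exactly how the effective smallness $|v|\ge CM^{p+2}$ entered. Hence
\[
\sup_t\|w(t)-e^{it\Delta}\varphi\|_{H^1}\to0
\qquad(\rho\to\infty),
\]
and combined with $\|e^{i(\tau/\rho)\Delta}\varphi-\varphi\|_{H^1}\lesssim|\tau|/\rho$ we get pointwise convergence of the integrand to $\langle\alpha(x+2\tau\theta)|\varphi|^p\varphi,\psi\rangle$.

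To conclude by dominated convergence, I need a $\rho$-independent integrable majorant in $\tau$. Here I use (A2) together with $\varphi,\psi\in X^{s_2}$. By H\"older and Sobolev in the energy space, the integrand is bounded by
\[
\|\langle x+2\tau\theta\rangle^{-\sigma_2}\,|w|^{p+1}|e^{i(\tau/\rho)\Delta}\psi|\|_{L^1}.
\]
Splitting the weight, one part must absorb spatial localization of the solution. For $|\tau|\lesssim\rho$ the free flow moves the profiles only by $O(1)$ in $\langle x\rangle$-weights. Propagating the weighted norm $\|\langle x\rangle^{s_2}\cdot\|_{L^2}$ under $e^{it\Delta}$ for $|t|\le1$ (using $H^2$-type or the $|\nabla|^{s}L^{2d/(d+2s)}$ component of $X^s$), I expect a bound of the form
\[
C\langle\tau\rangle^{-\sigma_2}\quad\text{with }\sigma_2>1,
\]
which is integrable. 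For $|\tau|\gtrsim\rho$, i.e. $|t|\gtrsim1$, I would instead use the dispersive decay of $e^{it\Delta}\psi$ and of $w$ via the $|\nabla|^{s}L^{2d/(d+2s)}$ part of $X^{s_2}$. Together with $\alpha\in L^{d/(2\sigma_2)\,-}$-type bounds, this gives a contribution of order
\[
\rho\int_{|t|>1}|t|^{-s_2}\,dt
\]
that must be shown to be $o(1)$ after pairing. More precisely, the original $t$-integral over $|t|\ge1$ must be $o(1/\rho)$. This uses that $\alpha(x+2vt)$ is evaluated far from the support of the essentially stationary $w$, giving an extra factor $\langle\rho t\rangle^{-\sigma_2}$.

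The main obstacle is this tail estimate: producing a uniform-in-$\rho$ majorant that is integrable in $\tau$, mixing spatial decay of $\alpha$ (A2) with the weighted control of $w$. The weighted control of $w$ (not just of the free evolution $e^{it\Delta}\varphi$) has to be inherited from the fixed-point construction in Theorem \ref{T:well-defined} in $X^{1,s_1}$-type norms. The condition $\sigma_2\in(1,d/2)$ is precisely what makes $\langle\tau\rangle^{-\sigma_2}$ integrable while keeping $\langle x\rangle^{-\sigma_2}$ compatible with Hardy/Sobolev estimates (hence $d\ge3$). I would first try to bound $\|\langle x\rangle^{s_2}w(t)\|_{L^2}\lesssim M(1+|t|)^{s_2}$ uniformly in $\rho$ by a commutator argument on the moving-frame equation.
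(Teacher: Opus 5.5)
Your moving-frame identity, the rescaling $t=\tau/\rho$, and the pointwise convergence of the integrand are fine. One small correction: with $\varphi\in H^2$ you only get $\|(e^{i(\tau/\rho)\Delta}-1)\varphi\|_{H^1}\lesssim(|\tau|/\rho)^{1/2}$, not $|\tau|/\rho$, but this is harmless.

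The genuine gap is the step you flag and leave open: dominated convergence for an integrand that still contains the nonlinear solution $w$. A $\rho$-independent integrable majorant in $\tau$ would need spatial localization of $w$ uniformly in $\rho$, and for $|t|\gtrsim1$ also dispersive decay of $w$. Nothing available provides this. The fixed-point construction does not propagate $X^{1,s_1}$- or $\langle x\rangle^{s_2}$-weighted norms of $w$. It only controls $\sup_t\|w-e^{it\Delta}\varphi\|_{H^1}$ and the space-time norm $\|\langle x+2tv\rangle^{-\frac{p}{p+2}s_1}w\|_{L^{q_0}_tH^{1,r_0}_x}$, whose weight is centred on the moving coefficient. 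Your commutator bound on $\|\langle x\rangle^{s_2}w(t)\|_{L^2}$ is only conjectured. Its uniformity in $\rho$ is also unclear, since the Duhamel part of $w$ is emitted along the trajectory $x=-2sv$ of $\alpha$. Putting all the decay on the free factor $e^{i(\tau/\rho)\Delta}\psi$ via Proposition \ref{weighted estimate-free solu} does not close either. That estimate is $L^2$-based, so you would need $|w|^pw\in L^2$, i.e.\ $w\in L^{2(p+1)}$, which $H^1$ bounds do not give once $p>\frac{2}{d-2}$.

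The missing idea is to split off the nonlinear correction \emph{before} taking limits, so that dominated convergence is only applied to free evolutions. Write the pairing as $M+E$, where $M$ has $|e^{it\Delta}\varphi|^pe^{it\Delta}\varphi$ in place of $|w|^pw$. After rescaling, the integrand of $\rho M$ is bounded by
\[
\|\alpha(x+2\tau\theta)e^{i(\tau/\rho)\Delta}\varphi\|_{L^2}\,\|e^{i(\tau/\rho)\Delta}\varphi\|_{L^{2(p+1)}}^p\,\|e^{i(\tau/\rho)\Delta}\psi\|_{L^{2(p+1)}}\lesssim\langle\tau\rangle^{-\sigma_2}.
\]
This follows from Proposition \ref{weighted estimate-free solu}, $(\mathrm{A2})$, $\varphi\in X^{s_2}$ and $H^2\hookrightarrow L^{2(p+1)}$. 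The remainder is estimated without rescaling, by H\"older and Strichartz:
\[
|E|\lesssim\|w-e^{it\Delta}\varphi\|_{L^{q_0}_tL^{r_0}_x}\,\bigl\|\alpha(x+2tv)\,e^{it\Delta}\psi\,\bigl(|w|^p+|e^{it\Delta}\varphi|^p\bigr)\bigr\|_{L^{q_0'}_tL^{r_0'}_x}\lesssim|v|^{-\frac{p+1}{p+2}}\cdot|v|^{-\frac{p+1}{p+2}}.
\]
The first factor comes from \eqref{E:supest}. The second is handled as in \eqref{E:nonlinearest}, using the fixed-point bound for $w$ and Lemma \ref{weigheted in t,x-free solu} for $\varphi,\psi$; each of the $p+1$ weighted factors contributes $|v|^{-\frac{1}{p+2}}$. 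Since $\frac{2(p+1)}{p+2}>1$, this gives $E=o(\rho^{-1})$.

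Note that the smallness of $w-e^{it\Delta}\varphi$ by itself is only $O(\rho^{-\frac{p+1}{p+2}})$. The extra decay comes from the weighted space-time norms around the moving coefficient, not from any pointwise-in-time localization of $w$.
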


\begin{remark}
Theorem \ref{T:inverse-formula} extends \cite[Theorem 3.1]{Wa} by treating general dimensions $d\ge3$ and including the endpoint cases. 
A justification for part of the argument in \cite[Theorem 3.1]{Wa} was discussed in \cite{Mu}. In the present paper, we establish the corresponding result by introducing a moving frame adapted to the high-velocity regime and extend it to all dimensions $d\ge3$, including the endpoint cases. As for the assumptions on $\alpha$, apart from the shared assumption $(\mathrm{A2})$, we replace the assumptions $\alpha\in W^{2,\infty}$, $\alpha\ge0$, and $x\cdot\nabla\alpha\le0$ in \cite{Wa} with the decay condition $(\mathrm{A1})$.
\end{remark}

Using this formula, we can reconstruct both the exponent $p$ and the X-ray transform of $\alpha$ from the scattering map.

\begin{thm}\label{T:recoveryf}
Let $d\ge3$ and let $p$ satisfy \eqref{E:range_p}. Suppose that $\alpha$ satisfies $(\mathrm{A1})$ and $(\mathrm{A2})$. 
 Let $s_1$ and $s_2$ be given by Definition \ref{D:sj}.
Let $\varphi\in H^2\cap X^{1,s_1}\cap X^{s_2}$
be a function.
If $\varphi$ and $\theta \in \mathbb{S}^{d-1}$ satisfy
\[
	\int_\R \int_{\R^d} \alpha (x+2t\theta) |\varphi(x)|^{p+2} dxdt \neq0
\]
then 
\begin{equation}\label{E:precov}
p=-1+
\log
\left(
\lim_{\rho\to\infty}
\frac{
\left\langle
(S-I)(e^{1+i\rho\theta\cdot x}\varphi),
e^{i\rho\theta\cdot x}\varphi
\right\rangle
}{
\left\langle
(S-I)(e^{i\rho\theta\cdot x}\varphi),
e^{i\rho\theta\cdot x}\varphi
\right\rangle
}
\right).
\end{equation}
Moreover, if $\varphi$ is compactly supported and normalized as $\|\varphi\|_{L^{p+2}(\R^d)}=1$,
then for any $\theta\in\mathbb{S}^{d-1}$ and $y\in\R^d$,
\begin{equation}\label{E:arecov}
\int_{\R}
\alpha(y+2\theta t)\,dt
=
\lim_{n\to\infty}
\lim_{\rho\to\infty}
i\rho
\left\langle
(S-I)(e^{i\rho\theta\cdot x}\varphi_n(x-y)),
e^{i\rho\theta\cdot x}\varphi_n(x-y)
\right\rangle,
\end{equation}
where $\varphi_n(x)=n^{\frac{d}{p+2}}\varphi(nx)$.
\end{thm}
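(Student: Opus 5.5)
Both parts follow from Theorem \ref{T:inverse-formula} once we know the boosted data lie in $B$. For fixed $\varphi$ and $\rho$ large, $e^{i\rho\theta\cdot x}\varphi$ and $e^{1+i\rho\theta\cdot x}\varphi=e\cdot e^{i\rho\theta\cdot x}\varphi$ both belong to $B_C$. Indeed, we may take $M=e\|\varphi\|_{H^2\cap X^{1,s_1}}$, and $\rho\ge CM^{p+2}+1$ holds eventually. So $S$ is defined on these data, and the same holds for each fixed $n$ and each translate $\varphi_n(\cdot-y)$.

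\textbf{Recovery of $p$.} I apply \eqref{E:recovery} with $\psi=\varphi$, once to the profile $\varphi$ and once to the profile $e\varphi$. Since $|e\varphi|^p e\varphi=e^{p+1}|\varphi|^p\varphi$, the limits are
\[
\lim_{\rho\to\infty}i\rho\langle (S-I)(e^{1+i\rho\theta\cdot x}\varphi),e^{i\rho\theta\cdot x}\varphi\rangle
= e^{p+1}\int_\R\int_{\R^d}\alpha(x+2t\theta)|\varphi|^{p+2}\,dx\,dt ,
\]
and the same expression without the factor $e^{p+1}$ for the profile $\varphi$. By hypothesis the common integral is nonzero, so the denominator is nonzero for large $\rho$. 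The factors $i\rho$ cancel in the quotient, and the ratio tends to $e^{p+1}$. Taking the logarithm gives \eqref{E:precov}; the logarithm is unambiguous because the limit is a positive real number.

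\textbf{Recovery of the X-ray transform.} Set $\phi_{n,y}=\varphi_n(\cdot-y)$. This function is compactly supported and lies in $H^2\cap X^{1,s_1}\cap X^{s_2}$ for each $n$, so \eqref{E:recovery} applies. Changing variables $x=y+z/n$, and using that the inner product is linear in the first slot, the inner limit equals
\[
\int_\R\int_{\R^d}\alpha(x+2t\theta)|\phi_{n,y}(x)|^{p+2}\,dx\,dt
=\int_{\R^d}A(y+z/n)\,|\varphi(z)|^{p+2}\,dz ,
\qquad A(w):=\int_\R\alpha(w+2t\theta)\,dt .
\]
The normalization $n^{d}\cdot n^{-d}=1$ is exactly why the $L^{p+2}$ scaling was chosen.

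It remains to let $n\to\infty$, and this is the step I expect to need the most care. By $(\mathrm{A2})$ with $\sigma_2>1$, $|\alpha(w+2t\theta)|\lesssim\langle w+2t\theta\rangle^{-\sigma_2}$, which is integrable in $t$ uniformly in $w$. Hence $A$ is bounded. By $(\mathrm{A1})$, $\alpha$ is continuous, so dominated convergence in $t$ shows that $A$ is continuous. Since $|\varphi|^{p+2}\in L^1$ has compact support, dominated convergence in $z$ gives
\[
\int_{\R^d}A(y+z/n)\,|\varphi(z)|^{p+2}\,dz\;\longrightarrow\;A(y)\,\|\varphi\|_{L^{p+2}}^{p+2}=A(y).
\]
This proves \eqref{E:arecov}.
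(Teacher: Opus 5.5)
Your proof is correct. The recovery of $p$ is the paper's argument: apply Theorem \ref{T:inverse-formula} to $\varphi$ and $e\varphi$ with $\psi=\varphi$, and use $|e\varphi|^pe\varphi=e^{p+1}|\varphi|^p\varphi$. You also usefully note that the denominator is eventually nonzero and that the limit is a positive real number.

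For \eqref{E:arecov} you take a genuinely different route in the limit $n\to\infty$. The paper keeps the $t$-integral outside. It shows $g_n(t)=\int\alpha(x+2\theta t)|\varphi_n(x-y)|^{p+2}\,dx\to\alpha(y+2\theta t)$ pointwise. It then uses $\supp\varphi\subset\{|x|\le L\}$, hence $\supp\varphi_n\subset\{|x|\le L\}$, to dominate every $g_n$ by the single function $h(t)=\sup_{|x|\le L}\langle x+y+2\theta t\rangle^{-\sigma_2}\in L^1_t$.

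You instead use Fubini to move the $t$-integral inside. This is justified by your bound $\sup_w\int|\alpha(w+2t\theta)|\,dt<\infty$, which comes from $|w+2t\theta|\ge|w\cdot\theta+2t|$ and $\sigma_2>1$. Everything then reduces to boundedness and continuity of the X-ray transform $A$, followed by a single dominated convergence against $|\varphi|^{p+2}\in L^1$.

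Your route never uses compact support, so your mention of it in the last step is superfluous. Scaling and translation preserve $H^2\cap X^{1,s_1}\cap X^{s_2}$, and $H^1\subset L^{p+2}$. So your argument proves \eqref{E:arecov} for every normalized $\varphi$ in that class. The paper's route avoids introducing $A$ and its continuity, at the cost of the compact-support hypothesis.

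One point should be stated more precisely: the continuity of $A$. Dominated convergence needs one dominating function valid for all $w$ near a fixed $w_0$; uniformly bounded $t$-integrals are not enough. Such a function is supplied by $\langle w+2t\theta\rangle^{-\sigma_2}\le 2^{\sigma_2/2}\langle w-w_0\rangle^{\sigma_2}\langle w_0+2t\theta\rangle^{-\sigma_2}$.
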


As an immediate consequence, we obtain the injectivity of the map
\[
(p,\alpha)\mapsto S_{p,\alpha}.
\]
The uniqueness of $\alpha$ follows from the injectivity of the X-ray transform, which in turn follows from the Fourier slice theorem (see for example Helgason \cite{He99}).

\begin{cor}
Let $d\ge3$ and suppose that $(p,\alpha)$ and $(\bar p,\bar\alpha)$ satisfy \eqref{E:range_p} together with assumptions $(\mathrm{A1})$ and $(\mathrm{A2})$. Let $S:B_{C_1}\to H^1$ and $\bar S:B_{C_2}\to H^1$ denote the corresponding scattering maps. If
\[
S(f)=\bar S(f)
\qquad
\text{for all }f\in B_{\max (C_1,C_2)} \cap C_0^\infty,
\]
then
\[
p=\bar p,
\qquad
\alpha=\bar\alpha.
\]
\end{cor}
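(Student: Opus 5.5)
Since $S(f)=\bar S(f)$ for all $f$ in $B_{\max(C_1,C_2)}\cap C_0^\infty$, I would feed both scattering maps the same boosted test data and use Theorem~\ref{T:recoveryf} on each side. The point is that $B_{\max(C_1,C_2)}\subset B_{C_1}\cap B_{C_2}$: a larger constant only strengthens the constraint $|v|\ge CM^{p+2}+1$. For a fixed profile $\varphi\in C_0^\infty$ (which lies in $H^2\cap X^{1,s}\cap X^{s_2}$ for every admissible index) and fixed $\theta$, the data $e^{i\rho\theta\cdot x}\varphi$ and $e^{1+i\rho\theta\cdot x}\varphi$ lie in this set once $\rho$ is large. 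There is one subtlety here: the exponent in the definition of $B_C$ is $p+2$ for $S$ and $\bar p+2$ for $\bar S$. This is harmless, because for fixed $M$ both conditions hold for $\rho$ large, and every limit in Theorem~\ref{T:recoveryf} is taken as $\rho\to\infty$. Consequently the right-hand sides of \eqref{E:precov} and \eqref{E:arecov} coincide for $S$ and $\bar S$ whenever the limits exist.

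\textbf{Step 1: recover $p$.} We need some $\varphi\in C_0^\infty$ and $\theta$ with $\int_\R\int\alpha(x+2t\theta)|\varphi|^{p+2}\,dx\,dt\ne0$, and likewise for $(\bar p,\bar\alpha)$ with the same data. Assume $\alpha\not\equiv0$ and $\bar\alpha\not\equiv0$; the degenerate case is treated below. Since $\sigma_2>1$, the X-ray transform $X\alpha(y,\theta)=\int\alpha(y+2t\theta)\,dt$ is well defined, and by injectivity (Fourier slice theorem) there exist $(y_0,\theta_0)$ with $X\alpha(y_0,\theta_0)\neq0$. By continuity, which follows from $(\mathrm{A1})$ and $(\mathrm{A2})$ via dominated convergence, $X\alpha(\cdot,\theta_0)\ne0$ near $y_0$; the same holds for $\bar\alpha$ near some point.

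The quantity needed is $\int X\alpha(x,\theta)|\varphi(x)|^{p+2}\,dx$. Taking $\varphi$ to be a concentrated bump $\varphi_n(\cdot-y_0)$ makes this $\approx X\alpha(y_0,\theta_0)\|\varphi\|_{p+2}^{p+2}\ne0$. The condition must hold for both pairs simultaneously with the same $\varphi$ and $\theta$. The set of $(y,\theta)$ where $X\alpha\ne0$ is open and nonempty, and so is the corresponding set for $\bar\alpha$. Their intersection could be empty, so instead I would argue via the formula itself. With one admissible $\varphi$ for $\alpha$, the limit of the ratio in \eqref{E:precov} computed from $S$ equals $e^{p+1}$, using homogeneity $|e\varphi|^pe\varphi=e^{p+1}|\varphi|^p\varphi$. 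Since $\bar S=S$ on these data, for $\bar S$ Theorem~\ref{T:inverse-formula} gives $i\rho\langle(\bar S-I)(e^{i\rho\theta\cdot x}\varphi),e^{i\rho\theta\cdot x}\varphi\rangle\to\int X\bar\alpha|\varphi|^{\bar p+2}$. This limit equals the corresponding one for $\alpha$, hence is nonzero, and the same ratio computation yields $e^{\bar p+1}$. So $p=\bar p$.

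\textbf{Step 2: recover $\alpha$.} With $p=\bar p$, formula \eqref{E:arecov} gives $X\alpha(y,\theta)=X\bar\alpha(y,\theta)$ for all $y$ and $\theta$, using a compactly supported smooth $\varphi$ normalized in $L^{p+2}$, so that $\varphi_n(\cdot-y)\in C_0^\infty$. The difference $\alpha-\bar\alpha$ is bounded by $\langle x\rangle^{-\sigma_2}$ with $\sigma_2>1$, so it is integrable along lines, and its X-ray transform vanishes. By the Fourier slice theorem, the Fourier transform of $\alpha-\bar\alpha$ vanishes on every hyperplane, in the distributional sense since $\alpha-\bar\alpha$ is a tempered function. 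Hence $\alpha=\bar\alpha$.

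\textbf{Degenerate case.} The remaining case is $\alpha\equiv0$ or $\bar\alpha\equiv0$, say $\alpha\equiv0$. Then $S=I$, so $\bar S=I$ on the data, and \eqref{E:arecov} gives $X\bar\alpha\equiv0$, hence $\bar\alpha=0$. In that case $p$ is not determined, so the corollary presumably assumes $\alpha\not\equiv0$. I expect the main obstacle to be this justification that the nondegeneracy hypothesis of Theorem~\ref{T:recoveryf} can be met with test data common to both pairs; the equality of the limits noted in Step 1 resolves it.
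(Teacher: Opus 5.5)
Your proposal is correct and takes the same route as the paper, which just reads off $p$ from \eqref{E:precov} and the X-ray transform from \eqref{E:arecov}, then concludes by injectivity of the X-ray transform via the Fourier slice theorem. Your extra points fill in details the paper leaves implicit: taking $\rho$ large to sidestep the $p$-dependence of $B_C$, and transferring the nonzero limit from $S$ to $\bar S$ so that the nondegeneracy hypothesis of Theorem \ref{T:recoveryf} holds for both pairs with the same data. Your degenerate-case remark is also right: if $\alpha\equiv\bar\alpha\equiv0$, then $S=\bar S=I$ for any $p\neq\bar p$ in \eqref{E:range_p}, so the corollary needs $\alpha\not\equiv0$ to conclude $p=\bar p$.
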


Since the Fourier slice theorem provides an explicit inversion formula for the X-ray transform, the coefficient $\alpha$ can in principle be reconstructed explicitly from the high-velocity scattering data. Since this reconstruction procedure is classical, we do not pursue the explicit formula here.

\medskip

The rest of the paper is organized as follows.
In Section \ref{sec-pre}, we introduce the function spaces and recall several basic estimates.
In Section \ref{sec-direct}, we prove Theorem \ref{T:well-defined}.
Finally, in Section \ref{sec-inverse}, we prove Theorems \ref{T:inverse-formula} and \ref{T:recoveryf}.

\section{Preliminaries}\label{sec-pre}
\numberwithin{equation}{section}

We first introduce some notations which will be used throughout this paper.
 
We sometimes use $ A\lesssim B$ to denote the inequality $A\le CB$ for some positive constant $C>0$ and write $\langle x \rangle=\sqrt{1+\left | x \right |^2 } $. The standard space-time Lebesgue spaces is defined by \[
\left \| u \right \| _{L_t^q L_x^r(\mathbb{R}\times \mathbb{R}^d  )}=\left \| \left \| u(t,x) \right \|_{L_x^r(\mathbb{R}^d )}  \right \|_{L_t^q(\mathbb{R} )} .
\] We denote $\left \| \cdot \right \| _{L_t^q L_x^r(\mathbb{R}\times \mathbb{R}^d  )}$ by $\left \| \cdot \right \| _{L_t^q L_x^r}$ and assume that all space-time norms will be taken over $\mathbb{R}\times \mathbb{R}^d$ unless indicated otherwise. 
We will work with the pair
\[
(q_0,r_0)=\left(p+2,\frac{2d(p+2)}{d(p+2)-4}\right),
\]
which is an admissible pair under \eqref{E:range_p}. 
In particular, $2<r_0<p+2\le \frac{2d}{d-2}$.

The free Schrödinger group is denoted by $e^{it\Delta}$. From the definition of $e^{it\Delta}$, we obtain useful dispersive estimates and an identity as follows; the latter one is also known as the Galilean transform, which is one of the essential steps in the proof of the scattering theory and recovery formulas from the large scattering data of \eqref{E:NLS}.
\begin{prop}
    If $2\le p\le \infty$ and $t\ne 0$, then \begin{align}
        \left \| e^{i t\Delta} \varphi  \right \| _{L^{\infty}}&\lesssim \left | t \right |^{-\frac{d}{2} }\left \| \varphi \right \|_{L^1},\label{dispersive1}\\
        \left \| e^{i t\Delta} \varphi  \right \| _{L^{2}}&= \left \| \varphi \right \|_{L^2},\label{dispersive2}\\
        \left \| e^{i t\Delta} \varphi  \right \| _{L^{p}}&\lesssim \left | t \right |^{-(\frac{d}{2}-\frac{d}{p}) }\left \| \varphi \right \|_{L^{p'}}.\label{dispersive3}
    \end{align} where $p'$ denotes the dual Hölder exponent of $p$.
\end{prop}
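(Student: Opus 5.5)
The three estimates follow from the explicit kernel of the free Schrödinger group together with interpolation.

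For $t\neq0$ and Schwartz $\varphi$ we have
\[
e^{it\Delta}\varphi(x)=(4\pi i t)^{-\frac d2}\int_{\R^d} e^{\frac{i|x-y|^2}{4t}}\varphi(y)\,dy .
\]
This follows by computing the inverse Fourier transform of the Gaussian multiplier $e^{-it|\xi|^2}$, for instance as the limit of $e^{-(\varepsilon+it)|\xi|^2}$ as $\varepsilon\downarrow0$.

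\textbf{Step 1: the $L^1$--$L^\infty$ bound.} The kernel has modulus exactly $(4\pi|t|)^{-d/2}$, so taking absolute values inside the integral gives \eqref{dispersive1} with constant $(4\pi)^{-d/2}$.

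\textbf{Step 2: the $L^2$ identity.} On the Fourier side $e^{it\Delta}$ is multiplication by the unimodular symbol $e^{-it|\xi|^2}$. Plancherel's theorem then gives \eqref{dispersive2} with equality.

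\textbf{Step 3: the $L^{p'}$--$L^p$ bound.} We apply the Riesz--Thorin interpolation theorem to the linear operator $e^{it\Delta}$ between the two endpoint bounds: $L^1\to L^\infty$ with norm $\lesssim|t|^{-d/2}$, and $L^2\to L^2$ with norm $1$. For $2\le p\le\infty$, choose $\vartheta\in[0,1]$ with
\[
\frac1p=\frac{1-\vartheta}{2}+\frac{\vartheta}{\infty},
\qquad\text{so that}\qquad \vartheta=1-\frac2p .
\]
The matching source exponent satisfies
\[
\frac1{p'}=\frac{1-\vartheta}{2}+\frac{\vartheta}{1}.
\]
The interpolated operator norm is then bounded by
\[
|t|^{-\frac d2\vartheta}=|t|^{-(\frac d2-\frac dp)} ,
\]
which is \eqref{dispersive3}.

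\textbf{Step 4: removing the Schwartz assumption.} All bounds were first proved for Schwartz $\varphi$. They extend to the full spaces by density, since $e^{it\Delta}$ is defined on $L^{p'}$ for $1\le p'\le2$ as the continuous extension of its action on Schwartz functions.

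The only step needing any care is the derivation of the kernel formula (the branch of $(4\pi it)^{-d/2}$ and the justification of the limit). This is standard, and the paper may simply cite it.
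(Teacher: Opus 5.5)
Your proposal is correct and follows the standard route the paper has in mind: the paper gives no proof, only remarking that these estimates follow from the definition of $e^{it\Delta}$, which amounts to your kernel bound for the $L^1$--$L^\infty$ estimate, Plancherel for the $L^2$ identity, and Riesz--Thorin between the two. Your choice $\vartheta=1-\frac2p$ is right, and the only nitpick is that Riesz--Thorin is most naturally applied on simple functions, which lie in $L^1\cap L^2$ where both endpoint bounds already hold, rather than on the Schwartz class.
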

\begin{prop}[Galilean transform]\label{Galilean transform}
    For any $v\in \mathbb{R}^d$, we have\begin{align}\label{GT}
        [ e^{it\Delta}e^{iv\cdot x}\varphi] (x)=e^{-i\left | v \right |^2 t }e^{iv\cdot x}[e^{it\Delta}\varphi](x-2tv)
    \end{align}
\end{prop}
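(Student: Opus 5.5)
The identity \eqref{GT} is a direct computation, and I would prove it on the Fourier side for $\varphi\in\mathcal{S}(\R^d)$, then extend by density. Throughout I use the normalization in which $e^{it\Delta}$ is the Fourier multiplier $e^{-it|\xi|^2}$, i.e. $\widehat{e^{it\Delta}f}(\xi)=e^{-it|\xi|^2}\hat f(\xi)$.

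\emph{Step 1 (boost on the Fourier side).} Multiplication by $e^{iv\cdot x}$ is a translation in frequency: $\widehat{e^{iv\cdot x}\varphi}(\xi)=\hat\varphi(\xi-v)$. Hence
\[
[e^{it\Delta}e^{iv\cdot x}\varphi](x)=(2\pi)^{-d}\int e^{ix\cdot\xi}e^{-it|\xi|^2}\hat\varphi(\xi-v)\,d\xi .
\]

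\emph{Step 2 (change of variables).} Substitute $\xi=\eta+v$ and expand the phase,
\[
x\cdot(\eta+v)-t|\eta+v|^2 = v\cdot x - t|v|^2 + (x-2tv)\cdot\eta - t|\eta|^2 .
\]
The factor $e^{iv\cdot x}e^{-it|v|^2}$ does not depend on $\eta$ and comes out of the integral. What remains is
\[
(2\pi)^{-d}\int e^{i(x-2tv)\cdot\eta}e^{-it|\eta|^2}\hat\varphi(\eta)\,d\eta=[e^{it\Delta}\varphi](x-2tv).
\]
Together this gives \eqref{GT} for Schwards $\varphi$.

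\emph{Step 3 (extension to $L^2$).} Both sides of \eqref{GT} are unitary operators on $L^2$ applied to $\varphi$: they are compositions of $e^{it\Delta}$, modulations and translations. Since Schwartz functions are dense in $L^2$, the identity extends to all $\varphi\in L^2$.

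There is no real obstacle here. The only point needing care is the sign convention for the propagator, since it fixes the signs of the phase $e^{-i|v|^2t}$ and of the shift $x-2tv$. As an alternative check, one can verify directly that $w(t,x)=e^{-i|v|^2t}e^{iv\cdot x}(e^{it\Delta}\varphi)(x-2tv)$ solves $i\partial_t w+\Delta w=0$ with $w(0)=e^{iv\cdot x}\varphi$. Expanding, $\Delta$ produces $-|v|^2$ and $2iv\cdot\nabla$ terms, which cancel exactly against the time derivative of the phase and of the shift. Uniqueness for the free Schrödinger equation in $L^2$ then gives \eqref{GT}.
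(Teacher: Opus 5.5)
Your proof is correct. The Fourier-side computation (the frequency shift $\xi=\eta+v$ and the expansion of the phase) is the standard verification, and the density extension to $L^2$ is fine. The paper gives no separate proof and only asserts that \eqref{GT} follows from the definition of $e^{it\Delta}$, so your argument, or your alternative check that the right-hand side solves $i\partial_t w+\Delta w=0$ with data $e^{iv\cdot x}\varphi$, is essentially what is intended.
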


We say $(q,r)$ an admissbile pair if $\frac{2}{q}=\frac{d}{2}-\frac{d}{r}$ and $(q,r,d)\neq(2,\infty,2)$. 
In order to estimate the nonlinearity of the NLS equation, we will make use of the following Strichartz's estimate and the admissible pair $(q,r)$ for the free Schrödinger group.

\begin{thm}[Strichartz's estimate]\label{Strichartz's estimate}
    For every $\varphi\in L^2$ and admissible pair $(q,r)$, we have
    \begin{align}
        \left \| e^{it\Delta}\varphi \right \| _{L_t^q L_x^r}\lesssim \left \| \varphi \right \| _{L^2}.
    \end{align}
    
For every admissible pair $(q,r)$ and $(\bar q,\bar r)$ and $F\in L_{t}^{\bar q'}L_x^{\bar r'}$, we have \begin{align}
        \left \| \int_{-\infty}^{t} e^{i(t-s)\Delta}F(s) ds \right \| _{L_t^qL_x^{r}}\lesssim \left \| F \right \|_{L_t^{^{\bar q'}}L_x^{\bar r'}},
    \end{align}
    where $(\bar q',\bar r')$ denotes the dual Hölder exponent of $(\bar q, \bar r)$.
\end{thm}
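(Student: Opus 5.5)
The statement just above is the standard Strichartz estimate. I would prove it by the usual $TT^*$ argument, built on the dispersive estimate \eqref{dispersive3} and the unitarity \eqref{dispersive2}.

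\emph{Homogeneous estimate.} Set $T\varphi=e^{it\Delta}\varphi$. Then $TT^*F(t)=\int_{\R} e^{i(t-s)\Delta}F(s)\,ds$. Boundedness of $T:L^2\to L^q_tL^r_x$ is equivalent to boundedness of $TT^*:L^{q'}_tL^{r'}_x\to L^q_tL^r_x$. By \eqref{dispersive3},
\[
\|TT^*F(t)\|_{L^r}\lesssim\int_{\R}|t-s|^{-(\frac d2-\frac dr)}\|F(s)\|_{L^{r'}}\,ds .
\]
Admissibility gives $\frac d2-\frac dr=\frac2q$. For $q>2$ we have $0<\frac 2q<1$ and $1+\frac1q=\frac1{q'}+\frac2q$, so the one-dimensional Hardy--Littlewood--Sobolev inequality yields the bound $\|F\|_{L^{q'}_tL^{r'}_x}$. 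The endpoint $q=2$ (with $r=\frac{2d}{d-2}$, $d\ge3$) is where HLS fails. That is the main obstacle, and there I would invoke the Keel--Tao bilinear argument: dyadically decompose $|t-s|\sim2^j$, interpolate the bilinear form between the $L^1\to L^\infty$ bound \eqref{dispersive1} and the $L^2$ bound \eqref{dispersive2}, and sum using real interpolation. I would simply cite Keel--Tao for this case rather than reproduce it.

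\emph{Inhomogeneous estimate.} First take the untruncated operator $\int_{\R} e^{i(t-s)\Delta}F(s)\,ds = T\bigl(T^*_{\bar q,\bar r}F\bigr)$. The map $T^*_{\bar q,\bar r}:L^{\bar q'}_tL^{\bar r'}_x\to L^2$ is bounded by duality with the homogeneous estimate for $(\bar q,\bar r)$. Applying the homogeneous estimate for $(q,r)$ then bounds this operator in $L^q_tL^r_x$.

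\emph{Retarded estimate.} To pass to $\int_{-\infty}^t$, when $q>\bar q'$ I would use the Christ--Kiselev lemma. Since $\bar q'\le2\le q$, this covers every case except $q=\bar q=2$, which is the double endpoint and again follows from Keel--Tao's treatment of the retarded bilinear form. The non-endpoint pair $(q_0,r_0)$ actually used in the paper needs only HLS together with Christ--Kiselev.
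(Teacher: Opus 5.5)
The paper does not prove this theorem; it is recorded in the preliminaries as classical background and used as a black box. Your outline is the standard proof, and it is correct. It consists of $TT^*$ with the dispersive bound \eqref{dispersive3} and one-dimensional Hardy--Littlewood--Sobolev for $2<q<\infty$, then Keel--Tao at the endpoint $(2,\frac{2d}{d-2})$. The untruncated inhomogeneous operator is handled by the factorization $T\circ T^*_{\bar q,\bar r}$. The retarded operator follows from Christ--Kiselev, which applies because $\bar q'\le 2\le q$, with equality only at the double endpoint that Keel--Tao covers.

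Two small remarks. First, the case $q=\infty$, $r=2$ is not covered by your HLS step, since there $2/q=0$, but it is immediate from \eqref{dispersive2}. Second, the paper only uses the diagonal non-endpoint pair $(q,r)=(\bar q,\bar r)=(q_0,r_0)$, plus the dual homogeneous bound in \eqref{E:scatter_pf1}. For that pair you do not even need Christ--Kiselev: you can bound $\|\int_{-\infty}^t e^{i(t-s)\Delta}F(s)\,ds\|_{L^{r_0}}\lesssim\int_{\R}|t-s|^{-2/q_0}\|F(s)\|_{L^{r_0'}}\,ds$ directly and then apply HLS.
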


\subsection{Mismatch-type estimates}
The following lemmas will play a key role in estimating large solutions of NLS equations by establishing an appropriate weighted estimate for solutions of the free Schrödinger equation. Specifically, we will utilize the mismatch-type estimate to provide sufficient ``smallness," guaranteeing the highly boosted large data could be well estimated in some weighted space.
We recall the estimate from \cite{Mu}.

\begin{prop}\label{weighted estimate-free solu}
    Let $q: \mathbb{R}^d\to \mathbb{C}$ satisfy $|q(x)|\lesssim \left \langle x \right \rangle ^{-s}$ for some $s\in \left (  0,\frac{d}{2}   \right ) $. Then,
\begin{align*}
	\left \| q(x)e^{it\Delta}e^{iv\cdot x}\varphi \right \|_{L^2}
	=\left \| q(x + 2tv)e^{it\Delta}\varphi \right \|_{L^2}
	\lesssim \left \langle tv \right \rangle^{-s} \left \| \varphi \right \|_{X^s} 
\end{align*}
holds for any $v\in \mathbb{R}^d$, $\varphi \in X^s$, and
    $t\in \mathbb{R}$.
\end{prop}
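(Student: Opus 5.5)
The equality is immediate from the Galilean transform \eqref{GT}. Since $|e^{-i|v|^2t}e^{iv\cdot x}|=1$, we get $|q(x)[e^{it\Delta}e^{iv\cdot x}\varphi](x)| = |q(x)[e^{it\Delta}\varphi](x-2tv)|$. The change of variables $x\mapsto x+2tv$ in the $L^2$ integral then gives $\|q(x+2tv)e^{it\Delta}\varphi\|_{L^2}$. It therefore remains to prove
\[
\|q(x+2tv)e^{it\Delta}\varphi\|_{L^2}\lesssim \langle tv\rangle^{-s}\|\varphi\|_{X^s}.
\]
Set $a=2tv$. The bound $\|q\|_{L^\infty}\lesssim1$ together with unitarity \eqref{dispersive2} handles the regime $|a|\lesssim 1$, so assume $|a|\ge 1$.

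For $|a|\ge1$ split $\mathbb{R}^d$ into the near region $\{|x+a|\le |a|/2\}$ and the far region $\{|x+a|>|a|/2\}$. On the far region, $|q(x+a)|\lesssim\langle a\rangle^{-s}$, so unitarity gives a contribution $\lesssim \langle a\rangle^{-s}\|\varphi\|_{L^2}\le\langle a\rangle^{-s}\|\varphi\|_{X^s}$.

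On the near region, $|x|\sim|a|$, so the free solution must be shown to be small there. Split $e^{it\Delta}\varphi = e^{it\Delta}(\chi\varphi)+e^{it\Delta}((1-\chi)\varphi)$, where $\chi$ is a cutoff to $\{|y|\le |a|/4\}$.
\begin{itemize}
\item The far piece satisfies $\|(1-\chi)\varphi\|_{L^2}\lesssim\langle a\rangle^{-s}\|\langle x\rangle^s\varphi\|_{L^2}$, and unitarity finishes it.
\item For the near piece, first consider $|t|\lesssim |a|$. Free propagation from $|y|\le|a|/4$ to $|x|\ge|a|/2$ costs decay, but establishing this for rough data requires nonstationary phase, which is awkward.
\end{itemize}

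A cleaner route is to use the second component of the $X^s$ norm directly. Write
\[
e^{it\Delta}\varphi=|\nabla|^{-s}e^{it\Delta}|\nabla|^s\varphi.
\]
By the Hardy–Littlewood–Sobolev/Sobolev embedding $\dot W^{s,\frac{2d}{d+2s}}\hookrightarrow L^2$ we have $\|e^{it\Delta}\varphi\|_{L^2}\lesssim \||\nabla|^s\varphi\|_{L^{2d/(d+2s)}}$. Alternatively, apply Hölder with $q(\cdot+a)\in L^{d/s,\infty}$ uniformly:
\[
\|q(\cdot+a)e^{it\Delta}\varphi\|_{L^2}\lesssim \|e^{it\Delta}\varphi\|_{L^{2d/(d-2s)}}\lesssim |t|^{-s}\|\varphi\|_{L^{2d/(d+2s)}}
\]
by \eqref{dispersive3}. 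This is a time-decay estimate, not $|tv|$-decay.

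To recover the factor $|v|^{-s}$, interpolate the two mechanisms. Use the weight estimate for $|t|\le |a|^{?}$ (physical localization: $\langle x\rangle^{s}\varphi$ controls mass near $x=-a$) and the dispersive estimate otherwise. I expect the correct approach is the exact identity
\[
e^{it\Delta}\varphi = M(t)D(t)\mathcal{F}M(t)\varphi,
\]
combined with $\langle a\rangle^{-s}$ versus $|x|^{-s}$ type weights.

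Concretely, the main step is to show
\[
\||x+a|^{-s}e^{it\Delta}\varphi\|_{L^2}\lesssim |a|^{-s}\|\varphi\|_{X^s},
\]
with $|q(x)|\lesssim\langle x\rangle^{-s}\le |x|^{-s}$ used on the near region. Here $s<d/2$ ensures $|x|^{-s}$ is locally $L^2$-admissible as a Hardy weight. I would use the commutation $e^{-it\Delta}(x+a)e^{it\Delta}=x+a+2it\nabla$ to move the weight: by Hardy-type and operator-convexity (Heinz) arguments, $|x+a+2it\nabla|^{s}$ is comparable to combinations of $|x+a|^s$ and $|t|^s|\nabla|^s$. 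Then use duality to write
\[
\langle |x+a|^{-s} e^{it\Delta}\varphi,g\rangle
\]
and bound it by the smaller of $\|\langle x\rangle^s\varphi\|$ (when $|t|\lesssim1$) and $|t|^{-s}\||\nabla|^s\varphi\|_{L^{2d/(d+2s)}}$ via \eqref{dispersive3} when $|t|\gtrsim 1$. Since $|a|=2|t||v|$, this gives the $\langle tv\rangle^{-s}$ bound in both regimes only after tracking the $v$-dependence carefully.

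Establishing this weighted bound uniformly in $t$ and $v$ is the genuine obstacle: both components of the $X^s$ norm must be combined to produce decay in the product $|t||v|$, rather than in $|t|$ or $|v|$ separately.
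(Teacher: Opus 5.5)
The paper does not prove this proposition; it quotes it from \cite{Mu}. Taken on its own terms, your proposal has a genuine gap. The Galilean identity, the regime $|tv|\lesssim 1$, the region $\{|x+a|>|a|/2\}$ and the spatially far piece $(1-\chi)\varphi$ are all handled correctly. The one estimate that carries the content is never proved, and you say so yourself: $e^{it\Delta}\varphi$ must have weighted mass $\lesssim\langle tv\rangle^{-s}$ near $x=-2tv$.

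None of your suggested routes closes this. Choosing between $\|\langle x\rangle^s\varphi\|_{L^2}$ for $|t|\lesssim1$ and a dispersive bound for $|t|\gtrsim1$ gives no decay when $|t|\le 1\ll|tv|$. It gives only $|t|^{-s}$ instead of $|tv|^{-s}$ when $|t|\gtrsim1$. A threshold in $|t|$ alone cannot produce decay in $|t||v|$.

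Worse, your proposed main step $\||x+a|^{-s}e^{it\Delta}\varphi\|_{L^2}\lesssim|a|^{-s}\|\varphi\|_{X^s}$ is false. Fix $|a|=1$, let $t\to0$ (so $v=a/2t\to\infty$), and take $\varphi_\lambda(x)=\lambda^{d/2}\chi(\lambda(x+a))$. Both parts of $\|\varphi_\lambda\|_{X^s}$ stay bounded; the second is invariant under this scaling. Meanwhile, by Hardy's inequality, $\||x+a|^{-s}e^{it\Delta}\varphi_\lambda\|_{L^2}\to\lambda^s\||x|^{-s}\chi\|_{L^2}$. The homogeneous weight costs $\||\nabla|^s\varphi\|_{L^2}$, which $X^s$ does not control. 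So $\langle x+a\rangle^{-s}$ may be traded for $|x+a|^{-s}$ only when a factor $|t|^{-s}$ comes with it. (Also, $e^{-it\Delta}xe^{it\Delta}=x-2it\nabla$, not $x+2it\nabla$.)

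The missing idea is to split in frequency relative to $|v|$, rather than in $|t|$.

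\textbf{Dispersive bound.} Since $|x+a|^{-s}\in L^{d/s,\infty}$, O'Neil's inequality and the dispersive estimate interpolated to Lorentz spaces give $\|\langle x+a\rangle^{-s}e^{it\Delta}g\|_{L^2}\lesssim|t|^{-s}\|g\|_{L^{\frac{2d}{d+2s},2}}$, uniformly in $a$. This is your H\"older step done correctly.

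\textbf{Case $|v|\le1$.} Take $g=\varphi$ and use $\|\varphi\|_{L^{\frac{2d}{d+2s},2}}\lesssim\|\langle x\rangle^s\varphi\|_{L^2}$. Since $|t|^{-s}\le|tv|^{-s}$ here, this settles the case.

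\textbf{High frequencies, $|v|\ge1$.} Take $g=P_{\ge|v|/4}\varphi$ and use $\|P_{\ge N}\varphi\|_{L^p}\lesssim N^{-s}\||\nabla|^s\varphi\|_{L^p}$. This gives $|tv|^{-s}\||\nabla|^s\varphi\|_{L^{\frac{2d}{d+2s}}}$, which is exactly what the second component of $X^s$ is for.

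\textbf{Low frequencies, $|v|\ge1$.} For $f=P_{<|v|/4}\varphi$, use $(x+2tv)e^{it\Delta}=e^{it\Delta}\bigl(x+2t(v-i\nabla)\bigr)$. Invert $2t(v-i\nabla)$ on $\{|\xi|\le|v|/2\}$ by a multiplier $m(D)$ with $|\partial^\beta m|\lesssim|t|^{-1}|v|^{-1-|\beta|}\le|tv|^{-1}$. This gives $e^{it\Delta}f=m(D)\cdot\bigl[(x+2tv)e^{it\Delta}f\bigr]-m(D)\cdot e^{it\Delta}(xf)$. Commuting $x$ through $m(D)$ and inducting yields $\|\langle x+2tv\rangle^{-k}e^{it\Delta}f\|_{L^2}\lesssim|tv|^{-k}\|\langle x\rangle^k\varphi\|_{L^2}$ for integers $k$. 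Complex interpolation in the weights then gives all $s$. This replaces the non-stationary phase argument on rough data that you wanted to avoid.
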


Now, we proceed to extend this proposition for solutions of the free Schrödinger equation to $H^{1,r_0}_x$ space by imposing higher decay assumption on the function $q$. This is one of main steps in the proof of Theorems \ref{T:well-defined} and  \ref{T:inverse-formula}.

\begin{lemma}\label{weigheted in t,x-free solu}
Suppose that $p$ satisfies \eqref{E:range_p}.
Let $s\in (0,d/2)$ 
satisfy $ps>1$.
Then, it holds for any $\varphi\in H^2 \cap X^{1,s}$ that
\begin{align}
         \| \langle x+2tv \rangle ^{-\frac{p}{p+2}s}e^{it\Delta}\varphi  \|_{L_t^{q_0} H_x^{1,r_0}}\lesssim |v|^{-\frac{1}{p+2} }\left \| \varphi \right \|_{H^2 \cap X^{1,s}}  .
    \end{align}
\end{lemma}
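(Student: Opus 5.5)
We prove the estimate by interpolating, at each fixed time $t$, between an $L^2$-type weighted bound that decays in time and an unweighted $L^{p+2}$-type bound, and then integrating in time. Write $w(t,x)=\langle x+2tv\rangle^{-1}$, and let $f(t)$ denote either $e^{it\Delta}\varphi$ or $\nabla e^{it\Delta}\varphi$. The $H^{1,r_0}_x$ norm is controlled by $\|f\|_{L^{r_0}_x}$ summed over these two choices, since derivatives falling on the weight only improve the decay. The exponent $r_0$ lies strictly between $2$ and $p+2$. Hölder's inequality with the splitting
\[
\frac1{r_0}=\frac{\theta}{2}+\frac{1-\theta}{p+2}
\]
gives
\[
\|w^{\frac{p}{p+2}s}f\|_{L^{r_0}}\le \|w^{s}f\|_{L^2}^{\theta}\,\|f\|_{L^{p+2}}^{1-\theta}\cdot(\text{weight leftover}).
\]
Here $\theta$ is chosen so that $\theta s$ matches $\frac{p}{p+2}s$ up to a harmless factor. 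Concretely, one checks $\frac1{r_0}=\frac12-\frac{2}{d(p+2)}$, so $\theta=\frac{p+2}{p}\cdot(\text{something})$. If the exponents do not match exactly, I would instead split the weight as $w^{as}\cdot w^{bs}$ and put the second factor into $L^\infty$ using $w\le1$. This is legitimate because only an upper bound on the decay rate is needed.

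For the first factor, Proposition \ref{weighted estimate-free solu} applied with $q=\langle x\rangle^{-s}$ gives
\[
\|w^s e^{it\Delta}\varphi\|_{L^2}\lesssim \langle tv\rangle^{-s}\|\varphi\|_{X^s}.
\]
For $f=\nabla e^{it\Delta}\varphi=e^{it\Delta}\nabla\varphi$ the same bound holds with $\|\nabla\varphi\|_{X^s}$; both are controlled by $\|\varphi\|_{X^{1,s}}$. For the second factor, Sobolev embedding gives $\|f\|_{L^{p+2}}\lesssim\|f\|_{H^1}\le\|\varphi\|_{H^2}$, using $p+2\le\frac{2d}{d-2}$ together with unitarity. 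We therefore obtain a pointwise-in-time bound
\[
\|\langle x+2tv\rangle^{-\frac{p}{p+2}s}e^{it\Delta}\varphi\|_{H^{1,r_0}}\lesssim \langle tv\rangle^{-\gamma}\|\varphi\|_{H^2\cap X^{1,s}}
\]
for some $\gamma>0$.

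The main point is to arrange $\gamma q_0>1$. Then taking the $L^{q_0}_t$ norm and rescaling $\tau=t|v|$ gives
\[
\big\|\langle tv\rangle^{-\gamma}\big\|_{L^{q_0}_t}=|v|^{-1/q_0}\big\|\langle\tau\rangle^{-\gamma}\big\|_{L^{q_0}_\tau}=C|v|^{-\frac1{p+2}},
\]
which is exactly the claimed rate, since $q_0=p+2$.

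The hypothesis $ps>1$ should be precisely what makes the time integral converge. The natural choice is to interpolate so that the weighted $L^2$ piece carries weight power $s$ to the fraction $\frac{p}{p+2}$ of the norm. In terms of $|f|^{r_0}$ this can be arranged via $|f|^{r_0}=|f|^{\beta}|f|^{r_0-\beta}$; alternatively, one can use the simpler interpolation $L^2$–$L^{p+2}$ with $\theta$ determined by $r_0$ and put the excess weight in $L^\infty$. The resulting decay is $\gamma=\theta s$.

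The main obstacle is verifying that this $\theta$ gives $\gamma(p+2)>1$ from $ps>1$ alone, and this step is where I would first try adjusting the choice of endpoints. Solving the Hölder relation gives
\[
\theta=\frac{4}{dp}\le1,
\]
which may be too small, since then $\gamma(p+2)=\frac{4(p+2)s}{dp}$. So I would instead interpolate the weighted term against a different endpoint. The first attempt is to keep the weight split as $w^{\frac{p}{p+2}s}=(w^{s})^{\frac{p}{p+2}}$ and apply Hölder in the form
\[
\|w^{\frac{p}{p+2}s}f\|_{L^{r_0}}\le\|w^sf\|_{L^2}^{\frac p{p+2}}\,\|f\|_{L^{m}}^{\frac2{p+2}},
\]
with $m$ determined by scaling. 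One then checks that $m\in[2,\frac{2d}{d-2}]$ throughout the range \eqref{E:range_p}, so that $H^1$ controls the second factor. This yields $\gamma=\frac{ps}{p+2}$, hence $\gamma q_0=ps>1$, exactly matching the hypothesis. This strongly suggests it is the intended route.
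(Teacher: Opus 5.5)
Your final route is the paper's proof. You split $\langle x+2tv\rangle^{-\frac{p}{p+2}s}|f|=(\langle x+2tv\rangle^{-s}|f|)^{\frac{p}{p+2}}|f|^{\frac{2}{p+2}}$ by Hölder, apply Proposition \ref{weighted estimate-free solu} to the weighted $L^2$ factor and Sobolev to the other, handle the derivative through the commutator with the weight, and use $ps>1$ to integrate $\langle tv\rangle^{-\frac{ps}{p+2}}$ in $L^{p+2}_t$. The exponent $m$ you left implicit is exactly $\frac{2d}{d-2}$ for every $p$ in the range. Incidentally, your abandoned $L^2$--$L^{p+2}$ split actually has $\theta=1-\frac{4}{dp}$, not $\frac{4}{dp}$; this vanishes at $p=\frac4d$, so discarding that route was correct.
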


\begin{proof}
Let $c=\frac{p}{p+2}\in (0,1)$.
We begin with the estimate with respect to the space variable. 
   Recall that $2<r_0<p+2\le \frac{2d}{d-2}$. 
    Hence, by Hölder's inequality, we have \begin{align*}
        \left \| \langle x+2tv \rangle ^{-cs} e^{it\Delta}\varphi \right \| _{L^{r_0}}
&=\left \| [\langle x+2tv \rangle ^{-s} e^{it\Delta}\varphi]^c [e^{it\Delta}\varphi]^{1-c}\right \|_{L^{r_0}}\\
&\lesssim \left \| \langle x+2tv \rangle ^{-s} e^{it\Delta}\varphi \right \| _{L^2}^{c}\left \| e^{it\Delta}\varphi \right \|^{1-c}_{L^{\frac{2d}{d-2} }} \\
&\lesssim \left \langle tv \right \rangle^{-cs}\left \| \varphi \right \|_{X^s}^c\left \| \nabla \varphi \right \|_{L^2}^{1-c},  
    \end{align*}where we have used Proposition \ref{weighted estimate-free solu} and Sobolev's embedding for the last step.
Similarly, noting that $[\nabla, \langle x+2tv \rangle ^{-cs}]$ is controlled by $\langle x+2tv \rangle ^{-cs}$, we see that
        \begin{align*}
            \left \| \langle x+2tv \rangle ^{-cs} e^{it\Delta}\varphi \right \| _{H_x^{1,r_0}}&\lesssim \left \langle tv \right \rangle^{-cs}\left \| \varphi \right \|_{X^s}^c\left \| \nabla \varphi \right \|_{L^2}^{1-c}+\left \langle tv \right \rangle^{-cs}\left \| \nabla\varphi \right \|_{X^s}^c\left \| |\nabla|^2 \varphi \right \|_{L^2}^{1-c} \\
&\lesssim\left \langle tv \right \rangle^{-cs}\left \| \varphi \right \|_{H^2 \cap X^{1,s}}.
        \end{align*}
Take the $L^{q_0}_t$-norm of both sides to obtain
\begin{align*}
     \left \| \left \langle x+2 tv \right \rangle^{-cs} e^{it\Delta}\varphi \right \| _{L_t^{q_0}H_x^{1,r_0}}\lesssim\left \| \left \langle tv \right \rangle^{-cs} \right \| _{L^{q_0}_t}\left \| \varphi \right \|_{H^2 \cap X^{1,s}}\lesssim |v|^{-\frac{1}{p+2}}\left \| \varphi \right \|_{H^2 \cap X^{1,s}}.
\end{align*} 
The last step requires the condition $csq_0=ps>1$. 
This completes the proof.
\end{proof}


\section{The Direct Problem}\label{sec-direct}

In this section, we consider the scattering problem of \eqref{E:NLS} in the intercritical regime. 
\begin{thm}\label{wellposedness-u}
Let $d\ge 3$ and
let $p$ satisfy \eqref{E:range_p}.
Suppose that $\alpha$ satisfies $(\mathrm{A1})$.
Let $s_1$ be the number given in Definition \ref{D:sj}.
Let $\varphi\in H^2 \cap X^{1,s_1}$
and $v\in \R^d$ and set $u_{-}=e^{iv\cdot x}\varphi$.
There exists a constant $C>0$ such that 
if 
\[
	| v  | \ge C \| \varphi \|_{H^2 \cap X^{1,s_1}}^{p+2} +1
\]
then there exists a unique global solution $u$ to \eqref{E:NLS} as well as the scattering state $u_{+}\in H^1$ satisfying
\begin{align}\label{u-property}
  \|\langle x\rangle^{-\frac{p}{p+2}s_1} u\|_{L^{q_0}_t H^{1,r_0}_x ( \mathbb{R} \times \mathbb{R}^d)} \lesssim |v|^{\frac{p+1}{p+2} }\left \| \varphi \right \|_{H^2 \cap X^{1,s_1}} \end{align}
and
\begin{equation}
\lim_{t \to \pm \infty} \| u(t) - e^{it\Delta} u_\pm \|_{H^1} = 0.
\end{equation}
In particular, the scattering map $S:u_- \mapsto u_+$ is defined as a map from $B_C$ to $H^1$.
\end{thm}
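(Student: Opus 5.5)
We work in the moving frame where the boost disappears. Write $u(t,x)=e^{-i|v|^2t}e^{iv\cdot x}w(t,x-2tv)$. By the Galilean transform \eqref{GT}, $u$ solves \eqref{E:NLS} if and only if $w$ solves
\[
(i\partial_t+\Delta)w=\alpha(x+2tv)|w|^pw,
\]
and the condition $e^{-it\Delta}u(t)\to u_-=e^{iv\cdot x}\varphi$ becomes $e^{-it\Delta}w(t)\to\varphi$ as $t\to-\infty$. We therefore solve the Duhamel equation
\[
w(t)=e^{it\Delta}\varphi-i\int_{-\infty}^t e^{i(t-s)\Delta}\bigl[\alpha(x+2sv)|w|^pw(s)\bigr]\,ds
\]
by a contraction argument. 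The key weighted norm is
\[
\|w\|_Y:=\|\langle x+2tv\rangle^{-cs_1}w\|_{L^{q_0}_tH^{1,r_0}_x},\qquad c=\tfrac{p}{p+2},
\]
combined with the Strichartz norm $L^\infty_tH^1\cap L^{q_0}_tH^{1,r_0}$. Lemma \ref{weigheted in t,x-free solu} gives $\|e^{it\Delta}\varphi\|_Y\lesssim|v|^{-\frac1{p+2}}M$, where $M=\|\varphi\|_{H^2\cap X^{1,s_1}}$. The Strichartz part of the free solution is only $\lesssim M$, so it is not small.

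The nonlinear estimate splits the weight $\langle x+2tv\rangle^{-\sigma_1}$ coming from $\alpha$. Since $ps_1\le\frac{p+2}{p+1}\sigma_1$, we have $(p+1)cs_1\le\sigma_1$, so
\[
|\alpha(x+2tv)|\lesssim\langle x+2tv\rangle^{-(p+1)cs_1}.
\]
The same bound holds for $\nabla\alpha$ by (A1). Apply the dual Strichartz estimate with $(\bar q',\bar r')=(q_0',r_0')$. Note that $\frac1{q_0'}=\frac{p+1}{p+2}$, and Hölder in $x$ with the relation $\frac1{r_0'}=\frac{p+1}{r_0}+(\text{gap})$ works out exactly, as for the standard $H^1$ theory with $(q_0,r_0)$. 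We put $p+1$ factors of $\langle x+2tv\rangle^{-cs_1}w$ into $L^{q_0}_tL^{r_0}_x$-type norms, using Sobolev $H^{1,r_0}\subset L^{\rho}$ to raise integrability. This gives
\[
\Bigl\|\int_{-\infty}^te^{i(t-s)\Delta}\alpha(\cdot+2sv)|w|^pw\,ds\Bigr\|_{L^\infty H^1\cap L^{q_0}H^{1,r_0}}\lesssim\|w\|_Y^{p+1}.
\]
Since $\langle x+2tv\rangle^{-cs_1}\le 1$, the $Y$-norm of the Duhamel term is bounded by the same quantity. The derivative falling on $\alpha$ is handled by the $W^{1,\infty}$ part of (A1). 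The expected difficulty is exactly this Hölder bookkeeping: all $p+1$ factors must carry the weight, and the spatial exponents must close at both endpoints $p=4/d$ and $p=4/(d-2)$. At the energy-critical endpoint we need $H^{1,r_0}\subset L^{p+2}$ sharply; this is where $r_0<p+2\le\frac{2d}{d-2}$ is used.

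With this estimate, the closed ball $\|w\|_Y\le 2C_0|v|^{-\frac1{p+2}}M$ is mapped into itself provided
\[
(|v|^{-\frac1{p+2}}M)^{p+1}\ll|v|^{-\frac1{p+2}}M,
\]
that is, $|v|^{-\frac{p}{p+2}}M^p\ll1$, which is guaranteed by $|v|\ge CM^{p+2}$. The difference estimate is analogous and gives a contraction, in a slightly weaker metric if needed, namely the $L^{q_0}L^{r_0}$-weighted norm without derivatives. Uniqueness in the class \eqref{u-property} follows from the same bound.

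Undoing the transform, $\langle x\rangle^{-cs_1}u$ corresponds to $\langle x+2tv\rangle^{-cs_1}w$ up to the modulation $e^{iv\cdot x}$. The derivative produces a factor $|v|$, so $\|w\|_Y\lesssim|v|^{-\frac1{p+2}}M$ yields \eqref{u-property} with the factor $|v|^{\frac{p+1}{p+2}}$. Global existence forward in time and scattering follow because the Duhamel integral over $\mathbb R$ converges in $H^1$: its tails are bounded by $\|w\|_{Y((T,\infty))}^{p+1}\to0$. We set
\[
u_+=e^{iv\cdot x}\Bigl[\varphi-i\int_{\R}e^{-is\Delta}\alpha(x+2sv)|w|^pw\,ds\Bigr],
\]
and convergence in the moving frame transfers to $\|u(t)-e^{it\Delta}u_\pm\|_{H^1}\to0$ via \eqref{GT}. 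The last statement follows since the elements of $B_C$ are exactly such $u_-$.
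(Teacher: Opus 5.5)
Your proposal is correct and follows essentially the same route as the paper: the same moving frame $w$, the weighted norm $\|\langle x+2tv\rangle^{-\frac{p}{p+2}s_1}w\|_{L^{q_0}_tH^{1,r_0}_x}$ with the weighted free estimate for the linear part, absorption of $\langle x+2tv\rangle^{(p+1)\frac{p}{p+2}s_1}$ into $\alpha$ via $(\mathrm{A1})$, contraction in the weaker weighted $L^{q_0}_tL^{r_0}_x$ metric, and a loss of $\langle v\rangle$ when undoing the boost. One small correction to your aside: the H\"older bookkeeping closes by placing the $p$ undifferentiated factors in $L^{q_0}_tL^{r_c}_x$ with $r_c=\frac{dp(p+2)}{4}$, using $\|f\|_{L^{r_c}}\lesssim\||\nabla|^{s_c}f\|_{L^{r_0}}$ with $s_c=\frac d2-\frac2p\in[0,1]$ (the embedding that becomes sharp at $p=\frac{4}{d-2}$, where $r_c=\frac{2d^2}{(d-2)^2}$), rather than $L^{p+2}$, which balances the exponents only at $p=\frac4d$.
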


\subsection{Moving frame}
The difficulty in proving Theorem \ref{wellposedness-u} is that $e^{iv\cdot x} \varphi$ is large in $H^1$
for large $|v|$, even if $\varphi$ is small. 
By introducing the suitable moving frame, this largeness factor can be removed.
Indeed, define a new variable $w=w(t,x)$ by
\begin{align}\label{def-w}
        w(t)= T(-2tv)e^{i\left | v \right |^2 t }e^{-iv\cdot x}u(t),
\end{align}
where $(T(v)u)(x)=u(x-v)$ is a translation operator. 
Then, the equation for the new variable takes the form
\begin{equation}\label{eqn-w}
	w(t) = e^{it\Delta} \varphi- i \int_{-\infty}^{t} e^{i(t-s)\Delta} \alpha(x+2sv) |w|^p w(s) \, ds.
\end{equation}
Note that
the factor $e^{iv\cdot x}$ is removed in the new formulation.
Furthermore, by means of \eqref{GT}, one has
\[
	e^{-it\Delta} w(t) =  e^{-iv\cdot x} e^{-it\Delta} u(t). 
\]

Hence, $u(t)$ scatters to $u_\pm$
as $t\to \pm \infty$ if and only if $w(t)$ scatters to $e^{-iv\cdot x} u_\pm$ as $t\to \pm \infty$.
We shall show the following.
\begin{thm}\label{well-posedness-w}
Let $d\ge 3$ and
let $p$ satisfy \eqref{E:range_p}.
Suppose that $\alpha$ satisfies $(\mathrm{A1})$.
Let $s_1$ be the number given in Definition \ref{D:sj}.
Let $\varphi\in H^2 \cap X^{1,s_1}$
and $v\in \R^d$ and set $w_-=\varphi$.
There exists a constant $C>0$ such that 
if 
\[
	| v  | \ge C \| \varphi \|_{H^2 \cap X^{1,s_1}}^{p+2} +1
\]
then there exists a unique global solution $w$ to \eqref{eqn-w} as well as the scattering state $w_{+}\in H^1$ satisfying
\begin{align}\label{w-property}
  \|\langle x+2tv\rangle^{-\frac{p}{p+2}s_1} w\|_{L^{q_0}_t H^{1,r_0}_x ( \mathbb{R} \times \mathbb{R}^d)} \lesssim |v|^{-\frac{1}{p+2} }\left \| \varphi \right \|_{H^2 \cap X^{1,s_1}} 
\end{align}
and
\begin{equation} \label{w-scatter}
  \lim_{t \to \pm \infty} \| w(t) - e^{it\Delta} w_\pm \|_{H^1} = 0.
\end{equation}
Furthermore, we have 
\begin{equation}\label{E:supest}
	 \|w - e^{it \Delta }w_- \|_{L^\infty H^1 \cap L^{q_0}H^{1,r_0} } \lesssim |v|^{-\frac{p+1}{p+2}}
	\left \| \varphi \right \|_{H^2 \cap X^{1,s_1}}^{p+1}.
\end{equation}
In particular,
\begin{equation}\label{E:simpleest}
	\|w_+ - w_- \|_{H^1} \lesssim |v|^{-\frac{p+1}{p+2}}
	\left \| \varphi \right \|_{H^2 \cap X^{1,s_1}}^{p+1}.
\end{equation}
\end{thm}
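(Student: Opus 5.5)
We will prove the theorem by a contraction argument for the Duhamel formulation \eqref{eqn-w}, carried out in a weighted space adapted to the moving coefficient. Write $c=\frac{p}{p+2}$, $M=\|\varphi\|_{H^2\cap X^{1,s_1}}$ and $w_0(t)=e^{it\Delta}\varphi$. We work with the norms
\[
\|w\|_{Y}=\|\langle x+2tv\rangle^{-cs_1}w\|_{L^{q_0}_tH^{1,r_0}_x},\qquad \|w\|_{Z}=\|w\|_{L^\infty_tH^1_x\cap L^{q_0}_tH^{1,r_0}_x}.
\]
We look for $w=w_0+z$ with $z$ in the ball $\{\|z\|_Z\le \delta\}$, where $\delta=K|v|^{-\frac{p+1}{p+2}}M^{p+1}$.

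By Lemma \ref{weigheted in t,x-free solu}, the free part satisfies $\|w_0\|_Y\lesssim |v|^{-\frac1{p+2}}M$. The key nonlinear estimate is obtained from Strichartz's estimate with dual pair $(q_0',r_0')$. Since $p+2=q_0$ and $\frac{1}{r_0'}=\frac{p+1}{r_0}+\frac{1}{d/2\cdot\dots}$-type bookkeeping, one checks that $\frac{1}{q_0'}=\frac{p+1}{q_0}$ and $\frac1{r_0'}=\frac{p+1}{r_0}$ hold exactly; indeed $\frac{p+2}{r_0}=1$ fails in general, so we use Sobolev to place one or more factors in $L^{p+2}$, using $H^{1,r_0}\subset L^{p+2}$. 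The plan is to split $\alpha(x+2sv)$ as $\langle x+2sv\rangle^{-(p+1)cs_1}\cdot\beta$, with $\beta=\langle x+2sv\rangle^{(p+1)cs_1}\alpha(x+2sv)$. Assumption (A1) together with $(p+1)cs_1=\frac{p(p+1)}{p+2}s_1\le\sigma_1$ (which is exactly Definition \ref{D:sj}) gives $\beta\in W^{1,\infty}$ uniformly in $s$; this also covers the derivative that falls on $\alpha$. Distributing one weight $\langle x+2sv\rangle^{-cs_1}$ onto each of the $p+1$ factors and applying Hölder and Sobolev, we aim for
\[
\Big\|\int_{-\infty}^t e^{i(t-s)\Delta}\alpha(x+2sv)(|w|^pw)(s)\,ds\Big\|_{Z}\lesssim \|w\|_{Y}^{p+1},
\]
together with the difference estimate $\lesssim(\|w_1\|_Y^p+\|w_2\|_Y^p)\|w_1-w_2\|_Y$.

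The obstacle is that $z$ is controlled in $Z$, not in $Y$. Since the weight is bounded by $1$, we have $\|z\|_Y\le\|z\|_Z$, and therefore $\|w\|_Y\lesssim |v|^{-\frac1{p+2}}M+\delta$. The smallness condition $|v|\ge CM^{p+2}+1$ gives $|v|^{-\frac1{p+2}}M\le C^{-1/(p+2)}$ and $\delta\lesssim |v|^{-\frac1{p+2}}M\cdot(|v|^{-\frac1{p+2}}M)^{p}$. Consequently the map $z\mapsto -i\int e^{i(t-s)\Delta}\alpha(x+2sv)|w_0+z|^p(w_0+z)\,ds$ sends the ball into itself, with bound $\lesssim(|v|^{-\frac1{p+2}}M)^{p+1}\le\delta$ for a suitable $K$, and it is a contraction once $C$ is large. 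This yields the solution and estimate \eqref{E:supest}, and then \eqref{w-property} follows. For $|w|^pw$ with non-integer $p<1$, the derivative of the difference needs care. If necessary we would run the contraction in the weaker $L^{q_0}L^{r_0}$ metric on the $Z$-ball, which is the standard approach.

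Scattering follows from the finiteness of the dual Strichartz norm of the nonlinearity. We set $w_+=\varphi-i\int_{\R}e^{-is\Delta}\alpha(x+2sv)|w|^pw\,ds$; the Cauchy property of the tails gives \eqref{w-scatter}, and \eqref{E:supest} gives \eqref{E:simpleest}. Uniqueness holds in the class of solutions with finite $Y$-norm. Transforming back through \eqref{def-w} recovers Theorem \ref{wellposedness-u}.
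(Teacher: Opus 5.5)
Your proposal follows the paper's proof. The fixed point is obtained by contraction for \eqref{eqn-w}. The linear part is controlled by Lemma~\ref{weigheted in t,x-free solu}. The nonlinear part uses the bound $\bigl\|\int_{-\infty}^t e^{i(t-s)\Delta}\alpha(x+2sv)|w|^pw\,ds\bigr\|_{L^\infty H^1\cap L^{q_0}H^{1,r_0}}\lesssim\|\langle x+2tv\rangle^{-\frac{p}{p+2}s_1}w\|_{L^{q_0}H^{1,r_0}}^{p+1}$. That bound comes from writing $\alpha(x+2sv)=\langle x+2sv\rangle^{-\frac{p(p+1)}{p+2}s_1}\beta$, with $\beta\in W^{1,\infty}$ uniformly by (A1) and Definition~\ref{D:sj}. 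The contraction is closed in an $L^{q_0}L^{r_0}$-type metric, as in the paper.

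The one structural difference is cosmetic. You centre the ball at $e^{it\Delta}\varphi$ and measure the perturbation in the unweighted Strichartz norm, so \eqref{E:supest} falls out of the fixed point directly. The paper instead puts all of $w$ in a weighted ball and derives \eqref{E:supest} afterwards. Both close under the same smallness condition $|v|^{-\frac{1}{p+2}}\|\varphi\|_{H^2\cap X^{1,s_1}}\ll1$.

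The exponent bookkeeping for the nonlinear estimate is wrong as written, and you should fix it:
\begin{itemize}
\item The identity $\frac{1}{r_0'}=\frac{p+1}{r_0}$ holds only at $p=4/d$.
\item Placing the undifferentiated factors in $L^{p+2}$ (or any mix of $L^{p+2}$ and $L^{r_0}$) cannot close for $p>4/d$. Any such mix contributes at least $\frac{p}{p+2}$, while you need exactly $\frac{1}{r_0'}-\frac{1}{r_0}=\frac{4}{d(p+2)}<\frac{p}{p+2}$.
\item The correct split is $\frac{1}{r_0'}=\frac{p}{r_c}+\frac{1}{r_0}$ with $r_c=\frac{dp(p+2)}{4}$. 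Combine it with the embedding $\|f\|_{L^{r_c}}\lesssim\||\nabla|^{s_c}f\|_{L^{r_0}}\lesssim\|f\|_{H^{1,r_0}}$, where $s_c=\frac d2-\frac2p\in[0,1]$. This is exactly where the full range \eqref{E:range_p}, including both endpoints, is used.
\end{itemize}
With this correction the argument goes through.

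One smaller point concerns uniqueness. The contraction only gives uniqueness inside your ball, which is also all the paper proves. Your claim of uniqueness among all solutions with finite weighted norm is true but needs the usual subdivision-in-time argument on top.
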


\begin{proof}
Pick $\varphi \in H^2 \cap X^{1,s_1}$.
To solve \eqref{eqn-w},
we set a map
\[
	w \mapsto \Phi(w) = e^{it\Delta} \varphi- i \int_{-\infty}^{t} e^{i(t-s)\Delta} \alpha(x+2sv) |w|^p w(s) \, ds
\]
on a suitable metric space \begin{align*}
    X=\left \{ w:\mathbb{R}\times\mathbb{R}^d\to \mathbb{C} \ \middle|\   \left \| \langle x+2tv\rangle^{-\frac{p}{p+2}s_1}w  \right \|_{L_{t}^{q_0} H_{x}^{1,r_0}} \le 2C_* |v|^{-\frac{1}{p+2} }\left \| \varphi \right \|_{H^2 \cap X^{1,s_1}}   \right \} 
\end{align*}
with the metric
\[
d(u,v)= \left \|\langle x+2tv\rangle^{-\frac{p}{p+2}s_1}[u-v ] \right \|_{L_{t}^{q_0} L_{x}^{r_0}}.
\]
The constant $C_*>0$ encodes implicit constants appearing in several inequalities, including Strichartz's estimates and Sobolev's embedding. 
Our main task is to prove that $\Phi$ is  a contraction map on $X$.

Pick $w\in X$.
For the linear term of $\Phi (w)$, the desired estimate directly follows from the Lemma \ref{weigheted in t,x-free solu}.
Indeed, recalling $s_1p>1$, one has
\begin{equation}\label{E:WPw_pf1}
	\| \langle x+2tv\rangle^{-\frac{p}{p+2}s_1}e^{it\Delta} \varphi  \|_{L_{t}^{q_0} H_{x}^{1,r_0}} \le C_0 |v|^{-\frac{1}{p+2} } \|\varphi\|_{H^2 \cap X^{1,s_1}},
\end{equation}
where $C_0>0$ is a constant.
For the nonlinear term of $\Phi (w)$, we first observe that 
\begin{align*}
&\left\| \langle x+2tv\rangle^{-\frac{p}{p+2}s_1} \int_{-\infty}^{t} e^{i(t-s)\Delta} \alpha(x+2sv) |w|^p w(s) \, ds \right\|_{L^{q_0}_{t}H_x^{1,r_0}} \\
\lesssim &\| \langle x \rangle^{-\frac{p}{p+2}s_1} \|_{W^{1,\infty}_x} \left\| \int_{-\infty}^{t} e^{i(t-s)\Delta} \alpha(x+2sv) |w|^p w(s) \, ds \right\|_{L^{q_0}_{t}H_x^{1,r_0}}.
\end{align*}
Note that $\| \langle x \rangle^{-\frac{p}{p+2}s_1} \|_{W^{1,\infty}_x} \lesssim_{p,d}1$ as $s_1 \in ( \frac{1}{p}, \frac{d}{2}  ) $.
One sees from
 Sobolev's embedding and the interpolation inequality in homogeneous Sobolev spaces that
\[
\left \| w \right \| _{L_x^{r_c}}\lesssim \left \| |\nabla|^{s_c}w \right \|_{L^{r_0}_x}\lesssim \left \| w \right \|  _{H_x^{1,r_0}}
\]holds for $r_c=\frac{dp(p+2)}{4}$ and $ s_c=\frac{d}{2}-\frac{2}{p}\in[0,1].$
Thanks to this inequality and Strichartz's estimate, we have
\begin{equation}\label{E:nonlinearest}
\begin{aligned}
&\left\| \int_{-\infty}^{t} e^{i(t-s)\Delta} \alpha(x+2sv) |w|^p w(s) \, ds \right\|_{L^{q_0}_{t}H_x^{1,r_0}} \\
\lesssim{} & \| \alpha(x+2tv) |w|^p w \|_{L^{q_0'}_t H^{1,r_0'}_x} \\
\lesssim{} &   \| 
\langle x\rangle^{\frac{p+1}{p+2}ps_1}
\alpha \| _{W^{1,\infty}_x}\left \|  \| \langle x+2tv\rangle^{-\frac{p}{p+2}s_1}w  \|_{L_x^{r_c}}^p \| \langle x+2tv\rangle^{-\frac{p}{p+2}s_1}w  \| _{H_x^{1,r_0}}  \right \|_{L_t^{q_0'}}\\
\lesssim{} &
\| 
\langle x\rangle^{\sigma_1}
\alpha \| _{W^{1,\infty}_x}
\| \langle x+2tv\rangle^{-\frac{p}{p+2}s_1} w \| _{L_t^{q_0}H_x^{1,r_0}}^{p+1} .
\end{aligned}
\end{equation}
Hence, together with \eqref{E:WPw_pf1}, one has
\begin{equation}\label{E:WPw_pf2}
	\| \langle x+2tv\rangle^{-\frac{p}{p+2}s_1} \Phi(w) \|_{L_t^{q_0}H_x^{1,r_0}}^{p+1} 
	\le \left(C_0 
	+ C_1 (2C_*)^{p+1} (|v|^{-\frac{1}{p+2} } \|\varphi\|_{H^2 \cap X^{1,s_1}})^p
	\right)|v|^{-\frac{1}{p+2} } \|\varphi\|_{H^2\cap X^{1,s_1}},
\end{equation}
where $C_1>0$ is a constant depending on $d,p$, and $\| 
\langle x\rangle^{\sigma_1}
\alpha \| _{W^{1,\infty}_x}$.
Hence, if we choose $C_*=C_0$
then there exists $C>0$ depending on $d,p$, and $\| 
\langle x\rangle^{\sigma_1}
\alpha \| _{W^{1,\infty}_x}$
such that we have
\[
	C_1 (2C_*)^{p+1} (|v|^{-\frac{1}{p+2} } \|\varphi\|_{H^2 \cap X^{1,s_1}})^p \le C_* \Leftrightarrow |v| \ge C \|\varphi\|_{H^2 \cap X^{1,s_1}}^{p+2}.
\]
If $v$ satisfies this condition then
 $\Phi:X\to X$ holds.

By following the similar estimate as above, we observe that for any $w_1,w_2\in X$, 
\begin{align*}
   d(\Phi(w_1),\Phi(w_2))
&\lesssim \left\| \langle x+2tv\rangle^{-\frac{p}{p+2}s_1}\int_{-\infty}^{t} e^{i(t-s)\Delta} \alpha(x+2sv) [|w_1|^p w_1(s)-|w_2|^pw_2(s)] \, ds \right\|_{L^{q_0}_{t}L_x^{r_0}} \\
&\lesssim  \| \alpha(x+2tv) [|w_1|^p +|w_2|^p] [w_1-w_2] \|_{L^{q_0'}_t L^{r_0'}_x} \\
&\lesssim \| 
\langle x\rangle^{\sigma_1}
\alpha \| _{W^{1,\infty}_x}
|v|^{-\frac{p}{p+2}} \left \| \varphi \right \|_{H^2 \cap X^{1,s_1}}^{p}d(w_1,w_2),
\end{align*}
which implies that, by replacing the constant $C$ with a larger one if necessary, $\Phi$ is a contraction map on $X$ under the same assumption on $|v|$.
Hence, we find a unique solution $w\in X$ to \eqref{eqn-w}.

Let us derive further properties.
By definition of $X$, we have the first property of \eqref{w-property}.
Let us prove the scattering in the positive time direction. In fact, based on the similar estimate as in \eqref{E:nonlinearest}, we obtain 
\begin{align}\label{E:scatter_pf1}
\| e^{-it\Delta} w(t) - e^{-is\Delta} w(s) \|_{H^1}  
\lesssim  \|\langle x+2tv\rangle^{-\frac{p}{p+2}s_1} w \|_{L^{q_0}_t H^{1,r_0}_x\left ( (s,t)\times \mathbb{R}^d \right )}^{p+1}
\end{align}
for any $s<t$.
Since the right hand side tends to zero as $s\to \infty$, there exists a unique $w_+ \in H^1$ such that
\begin{align*}
    \lim_{t \to + \infty} \| e^{-it\Delta}w(t) -  w_+ \|_{H^1}= 0.
\end{align*}
The scattering to $w_-=\varphi$ in the negative time direction can be obtained in a similar way.
Further, by letting $s\to-\infty$ in \eqref{E:scatter_pf1} and using the unitary property of $e^{it\Delta}$, one obtains
\[
	\|w(t)-e^{it\Delta }w_-\|_{H^1} \lesssim 
	 \|\langle x+2tv\rangle^{-\frac{p}{p+2}s_1} w \|_{L^{q_0}_t H^{1,r_0}_x\left ( \R \times \mathbb{R}^d \right )}^{p+1}
	 \lesssim |v|^{-\frac{p+1}{p+2}} \| \varphi\|_{H^2 \cap X^{1,s_1}}^{p+1}
\]
for any $t\in \mathbb{R}$. This is $L^\infty H^1$-part of \eqref{E:supest}. The $L^{q_0} H^{1,r_0}$-part follows by estimating as in \eqref{E:nonlinearest}.
The estimate \eqref{E:simpleest} follows by letting $t \to \infty$ and  $s\to-\infty$ in \eqref{E:scatter_pf1}.
\end{proof}

\subsection{Proof of Theorems \ref{T:well-defined} and \ref{wellposedness-u}}
Now, we are ready to give the proof of Theorem \ref{wellposedness-u}.
Theorem \ref{T:well-defined} directly follows from Theorem \ref{wellposedness-u}.

\begin{proof}[Proof of Theorem \ref{wellposedness-u}]

Recalling the definition of $w$ \eqref{def-w} in terms of $u$, the existence and uniqueness of global solutions $u$ to \eqref{E:NLS} are equivalent to the existence and uniqueness of solutions $w$ to \eqref{eqn-w}.
Moreover, \eqref{def-w} implies that
\[
\left \| \left \langle x \right \rangle ^{-\frac{p}{p+2}s_1 }u(t)  \right \| _{H^{1,r_0}_x}\lesssim \left \langle v \right \rangle \left \| \left \langle x+2tv \right \rangle^{-\frac{p}{p+2}s_1 }w(t)  \right \|_{H^{1,r_0}_x} .
\]
Taking the $L_t^{q_0}$-norm of both sides and using the estimate \eqref{w-property}, we obtain
\[
 \|\langle x\rangle^{-\frac{p}{p+2}s_1} u\|_{L^{q_0}_t H^{1,r_0}_x ( \mathbb{R} \times \mathbb{R}^d)} \lesssim |v|^{\frac{p+1}{p+2} }\left \| \varphi \right \|_{H^2 \cap X^{1,s_1}}
\]
if $|v| \ge C\| \varphi \|_{H^2 \cap X^{1,s_1}}^{p+2} +1$.

In addition, once \(v\) is fixed, it follows from \eqref{GT}, \eqref{def-w} and \eqref{w-scatter} that
\[
\lim_{t\to \pm\infty}
\left\|u(t)-e^{it\Delta}e^{iv\cdot x}w_{\pm}\right\|_{H^1}
=
\lim_{t\to \pm\infty}
\left\|e^{iv\cdot x}\left [ w(t)-e^{it\Delta}w_{\pm} \right ]  \right\|_{H^1}
=0.
\]
This completes the proof of Theorem \ref{wellposedness-u}.
\end{proof}

\section{The Inverse Problem}\label{sec-inverse}

Our goal is to show that knowledge of $S$ is sufficient to determine the nonlinearity in \eqref{E:NLS}.

\subsection{A key convergence theorem}

In this section, we prove Theorem \ref{T:inverse-formula}.
In particular, we derive the following formula
\[
\lim_{\rho\to\infty}i\rho\left \langle (S-I)(e^{i\rho\theta \cdot x}\varphi) ,e^{i\rho\theta \cdot x}\psi \right \rangle=\int_{\mathbb{R} }\left \langle \alpha(x+2t\theta)|\varphi|^p \varphi,\psi\right \rangle  dt ,\quad\theta\in \mathbb{S}^{d-1}.
\] 
Note that a simple estimate \eqref{E:simpleest} is not sufficient to obtain this limit.

\begin{proof}[Proof of Theorem \ref{T:inverse-formula}]
Given a fixed $\theta\in \mathbb{S}^{d-1}$, we set $v=\rho\theta$ and $u_{-}=e^{iv\cdot x}\varphi$.
Thanks to Theorem \ref{T:well-defined}, if $\rho$ is large then $u_- \in B$ and $Su_-$ makes sense. 
For such $\rho$, by
definition of scattering map $S: u_{-}\mapsto u_{+}$ and $w(t)$, we have
\begin{align*}
&i\left \langle (S-I)e^{iv\cdot x}\varphi, e^{iv\cdot x}\psi \right \rangle \\
=&i\left [ \lim_{t \to +\infty}\left \langle e^{-it\Delta}u(t),e^{iv\cdot x}\psi  \right \rangle-\left \langle e^{iv\cdot x}\varphi, e^{iv\cdot x}\psi \right \rangle   \right ] \\
=&i\left [ \lim_{t \to +\infty}\left \langle w(t), e^{it\Delta}\psi \right \rangle-\left \langle \varphi,\psi \right \rangle   \right ]\\
=&i \lim_{t \to +\infty}\left \langle e^{-it\Delta}w(t)-\varphi,\psi \right \rangle\\
=&\int_{\mathbb{R}}\left \langle \alpha(x+ 2tv)|w|^p w(t), e^{it\Delta}\psi \right \rangle \ dt.
\end{align*}
To consider the limit $\rho\to \infty$, we split it into the main term and error term as follows:
\begin{align*}
&\int_{\mathbb{R}}\left \langle \alpha(x+ 2tv)|w|^p w(t), e^{it\Delta}\psi \right \rangle \ dt
=M+E,
\end{align*}
where
\begin{equation}\label{E:defM}
	M:= \int_{\mathbb{R}}\left \langle \alpha(x+2tv)\left | e^{it\Delta}\varphi \right |^p e^{it\Delta}\varphi, e^{it\Delta}\psi  \right \rangle\ dt
\end{equation}
and
\begin{equation}\label{E:defE}
	E:= \int_{\mathbb{R}}\left \langle \alpha(x+2tv)\left [ |w|^p w(t)- \left | e^{it\Delta}\varphi \right |^p e^{it\Delta}\varphi\right ] , e^{it\Delta}\psi\right \rangle\ dt.
\end{equation}

First, we claim that \begin{align}\label{claim 1}
    \lim_{\rho\to\infty}\rho M=\int_{\mathbb{R} }\left \langle \alpha(x+2t\theta)|\varphi|^p \varphi,\psi\right \rangle  dt. 
\end{align}
Indeed, by using (\ref{GT}) and a change of variables, we derive\begin{align*}
    M={}&\int_{\mathbb{R}}\left \langle \alpha \left | e^{it\Delta}\varphi(x-2\rho\theta t) \right |^p e^{it\Delta}\varphi(x-2\rho\theta t),e^{it\Delta}\psi(x-2\rho\theta t)  \right \rangle dt\\
={}&\frac{1}{\rho}  \int_{\mathbb{R}}\left \langle \alpha(x+2\theta t)\left | e^{i\frac{t}{\rho}\Delta }\varphi \right |^pe^{i\frac{t}{\rho}\Delta }\varphi,e^{i\frac{t}{\rho}\Delta }\psi  \right \rangle dt .
\end{align*}
Denote
\begin{align*}
    g_{\rho}(t)&:=\left \langle \alpha(x+2\theta t)\left | e^{i\frac{t}{\rho}\Delta }\varphi \right |^pe^{i\frac{t}{\rho}\Delta }\varphi,e^{i\frac{t}{\rho}\Delta }\psi \right \rangle,\\
h(t)&:=\left \langle \alpha(x+2\theta t)\left | \varphi \right |^p \varphi,\psi \right \rangle.  
\end{align*}
Then, by using the dominated convergence theorem, what we need to prove is that 
\begin{align}
    \lim_{\rho\to\infty}g_{\rho}(t)&=h(t),
    \label{claim1-1}\\
    \left | g_{\rho}(t) \right |&\lesssim \langle t \rangle^{-\sigma_2} \in L^1_t \label{claim1-2}
\end{align} hold for any $t\in \mathbb{R}$.

To achieve this goal, we first observe that 
\[
	 \left | g_{\rho}(t)-h(t) \right |  
\le \mathrm{I} + \mathrm{II},
\]
where
\begin{align*}
  \mathrm{I} :={}&\left | \left \langle  \alpha(x+2\theta t)\left [ \left | e^{i\frac{t}{\rho}\Delta }\varphi \right |^p e^{i\frac{t}{\rho}\Delta }\varphi-\left | \varphi \right |^p \varphi   \right ] ,e^{i\frac{t}{\rho}\Delta }\psi  \right \rangle  \right |, 
  \\
\mathrm{II} :={}&\left | \left \langle \alpha(x+2\theta t)\left | \varphi \right |^p\varphi, e^{i\frac{t}{\rho}\Delta }\psi-\psi \right \rangle  \right | .
\end{align*}
Let us estimate $\mathrm{I}$.
By using Sobolev's embedding $\| \varphi  \| _{L^{p+2}}\le \| \varphi \| _{H^1}$ and the fact that
 \[
	\lim_{\tau \to 0} \| (e^{i\tau \Delta} -1)\varphi \|_{L^2} = 0
\]
for any $\varphi \in L^2$ follows by the dominated convergence theorem,
 we have \begin{align*}
    \mathrm{I}&\lesssim \left \| \alpha \right \| _{L^{\infty}}\left \{  \left \| e^{i\frac{t}{\rho}\Delta} \varphi\right \| _{L^{p+2}}^p +\left \| \varphi \right \| _{L^{p+2}}^p\right \}\left \| e^{i\frac{t}{\rho}\Delta} \varphi-\varphi \right \|_{L^{p+2}}\left \| e^{i\frac{t}{\rho}\Delta} \psi \right \|_{L^{p+2}}\\
&\lesssim \left \| \alpha \right \| _{L^{\infty}}\left \| \varphi \right \| _{H^{1}}^p\left \| (e^{i\frac{t}{\rho}\Delta}-1)|\nabla|^{\frac{pd}{2(p+2)}} \varphi  \right \| _{L^2}\left \| \psi \right \|_{L^2} \to 0 
\end{align*}
as $\rho\to\infty$.
Notice that $\frac{pd}{2(p+2)} \in (0,1]$ by assumption on $p$ and hence 
 $|\nabla|^{\frac{pd}{2(p+2)}} \varphi  \in L^2$, provided $\varphi \in H^1$.
One can handle $\mathrm{II}$ in the same way. Hence, we obtain \eqref{claim1-1}.

Next, let us prove \eqref{claim1-2}.
We use the assumption $(\mathrm{A2})$ for this part.
One has
\begin{align*}
    |g_{\rho}(t)|&=\left |\left \langle \alpha(x+2\theta t)\left | e^{i \frac{t}{\rho} \Delta}  \varphi \right |^p e^{i \frac{t}{\rho} \Delta}  \varphi,e^{i \frac{t}{\rho} \Delta}  \psi  \right \rangle\right | \\
&\lesssim \left \| \alpha(x+2\tfrac{ t}\rho v) e^{i \frac{t}{\rho} \Delta}\varphi\right \|_{L^2} \left \| e^{i \frac{t}{\rho} \Delta}\varphi \right \|_{L^{2(p+1)}}^{p} \| e^{i \frac{t}{\rho} \Delta}\psi \|_{L^{2(p+1)}}  \\
&\lesssim \left \langle t\theta \right \rangle ^{-\sigma_2}\left \| \varphi \right \| _{X^{s_2}}\left \| \varphi \right \| _{H^{2}}^{p} \| \psi \|_{H^{2}} \\
&\lesssim \left \langle t \right \rangle ^{-\sigma_2},
\end{align*} where we have used Sobolev embedding $H^2 \hookrightarrow H^{\frac{2d}{d+2}} \hookrightarrow L^{2(p+1)}$
and Proposition \ref{weighted estimate-free solu} to obtain the third line.
This completes the proof of \eqref{claim1-2}.

It remains to prove that \[
|E|=o(\rho^{-1}) \quad \text{as} \quad \rho\to\infty.
\]
By using the equation of $w$, \eqref{E:supest} and estimating as in \eqref{E:nonlinearest}, we have 
\begin{align*}
|E|
\lesssim {}&\left \| \alpha(x+2tv)e^{it\Delta}\psi\left [  |w|^p+ \left | e^{it\Delta}\varphi \right |^p \right ] \left ( w- e^{it\Delta}\varphi\right )  \right \|_{L_{t,x}^1}\\
\lesssim {}&\left \| \alpha(x+2tv) e^{it\Delta}\psi\left [  |w|^p+ \left | e^{it\Delta}\varphi \right |^p\right ]  \right\|_{L_t^{q_0'}L_x^{r_0'}}\left \| w- e^{it\Delta}\varphi\right \|_{L_t^{q_0}L_x^{r_0}} \\
\lesssim {}& |v|^{-\frac{p+1}{p+2}} \left \| \alpha(x+2tv) e^{it\Delta}\psi\left [  |w|^p+ \left | e^{it\Delta}\varphi \right |^p\right ]  \right\|_{L_t^{q_0'}L_x^{r_0'}} 
 \\
\lesssim {}& |v|^{-\frac{p+1}{p+2}} \left [  \left \| \langle x+2tv\rangle^{-\frac{p}{p+2}s_1 } w\right \| _{L_t^{q_0}H_x^{1,r_0}}^p+ \left \| \langle x+2tv\rangle ^{-\frac{p}{p+2}s_1}e^{it\Delta} \varphi \right \| _{L_t^{q_0}H_x^{1,r_0}}^p\right ] \\
& \qquad \times \left \| \langle x+2tv\rangle^{-\frac{p}{p+2}s_1}e^{it\Delta} \psi \right \| _{L_t^{q_0}L_x^{r_0}}\\
\lesssim {}&|v|^{-\frac{2(p+1)}{p+2}},
\end{align*}
which is $o(|v|^{-1})$ as $ |v|\to\infty$. This completes the proof of \eqref{E:recovery}.
\end{proof}

\subsection{Recovery formulae}

We finally prove Theorem \ref{T:recoveryf}. The reconstruction argument follows the standard strategy developed in previous inverse scattering studies \cite{Wa,Mu}.
We give a proof for the sake of completeness.

\begin{proof}[Proof of Theorem \ref{T:recoveryf}]
Let us begin with the proof of the formula \eqref{E:precov}.
Suppose that $\varphi \in C_0^\infty$ and $\theta \in S^{d-1}$ satisfy
\[
	\lim_{\rho\to\infty}
	i\rho
\left\langle
(S-I)(e^{i\rho\theta\cdot x}\varphi),
e^{i\rho\theta\cdot x}\varphi
\right\rangle
=
\int_{\R} \int_{\R^d}
\alpha(x+2t\theta)
|\varphi(x)|^{p+2} dx dt\neq0.
\]
Note that the equality is due to Theorem \ref{T:inverse-formula}.
Then, we also have
\[
	\lim_{\rho\to\infty}
	i\rho
\left\langle
(S-I)(e^{1+i\rho\theta\cdot x}\varphi),
e^{i\rho\theta\cdot x}\varphi
\right\rangle
=
e^{p+1}
\int_{\R} \int_{\R^d}
\alpha(x+2t\theta)
|\varphi(x)|^{p+2} dx dt.
\]
and hence
\[
\frac{\langle (S - I)(e^{1+i\rho \theta \cdot x} \varphi), e^{i\rho \theta \cdot x} \varphi \rangle}{\langle (S - I)(e^{i\rho \theta \cdot x} \varphi), e^{i\rho \theta \cdot x} \varphi \rangle} \to e^{p+1} \quad \text{as } \rho \to \infty.
\]
This shows \eqref{E:precov}.

Let us proceed to the proof of \eqref{E:arecov}.
Suppose that $\varphi$ is compactly supported and normalized as $\|\varphi\|_{L^{p+2}}=1$.
Fix $\theta \in S^{d-1}$ and $y \in \mathbb{R}^d$.
We see from Theorem \ref{T:inverse-formula} that
\[
\lim_{\rho\to\infty}
	i\rho
\left\langle
(S-I)(e^{i\rho\theta\cdot x}
\varphi_n(x-y),
e^{i\rho\theta\cdot x}\varphi_n(x-y)
\right\rangle=
\int_{\mathbb{R}} \int_{\mathbb{R}^d} \alpha(x+2\theta t) |\varphi_n(x-y)|^{p+2} \, dx \, dt 
\]
for each $n\ge1$.
Denote
\[
g_n(t) = \int_{\mathbb{R}^d} \alpha(x+2\theta t) |\varphi_n(x-y)|^{p+2} \, dx.
\]
Then, noting that $\int |\varphi_n|^{p+2}dx =1$, we obtain
\begin{align*}
    \left | g_n(t)-\alpha(y+2\theta t) \right | \le {}&\int_{\mathbb{R}^d}\left | \alpha(x+2\theta t)|\varphi_n(x-y)|^{p+2}-\alpha(y+2\theta t)|\varphi_n(x-y)|^{p+2} \right | dx\\
={}& \int_{\mathbb{R}^d}\left | \alpha(x/n+y+2\theta t)-\alpha(y+2\theta t)\right | |\varphi(x)|^{p+2}  dx \\ \to{}& 0 
\end{align*}
as $n\to\infty$ by dominated convergence theorem.
Hence,
\[
g_n(t) \to \alpha(y + 2\theta t) \quad \text{as } n \to \infty
\]
 for all $ t \in \mathbb{R}$.

Let $L>0$ be the number such that $\supp \varphi \subset \{|x| \le L \}$.
Then, by assumption $(\mathrm{A2})$, we have
\begin{align*}
    |g_n(t)|&\lesssim \int_{\mathbb{R}^d}\left \langle x+2\theta t \right \rangle^{-\sigma_2}|\varphi_n(x-y)|^{p+2} \ dx\\
&\lesssim \int_{\{|x|\le L\}}\left \langle x+y+2\theta t \right \rangle^{-\sigma_2}|\varphi_n(x)|^{p+2} \ dx\\
&\le \int_{\mathbb{R}^d} h(t)|\varphi_n(x-y)|^{p+2}\ dx
= h(t)
\end{align*}
for $n\ge1$,
where
\begin{equation*}
    h(t):= \sup_{|x|\le L}\langle  x+y+2\theta t \rangle^{-\sigma_2}
    \le \langle  \max(2|t| - (|y|+L),0) \rangle^{-\sigma_2}
    \in L_t^1(\mathbb{R} ).
\end{equation*}
Now, by the dominated convergence theorem, we conclude that
\[
\lim_{n\to\infty} \int_\R g_n(t) dt=
\int_{\mathbb{R}} \alpha(y + 2\theta t) \, dt .
\]
This completes the proof of \eqref{E:arecov}.
\end{proof}

\subsection*{Acknowledgements} 
S.M. was  supported by JSPS KAKENHI Grant Numbers JP23K20803, JP23K20805, and JP24K00529.

\end{document}